\newtheorem{theorem}{Theorem}
\newtheorem{corollary}[theorem]{Corollary}
\newtheorem{lemma}[theorem]{Lemma}
\newtheorem{proposition}[theorem]{Proposition}
\theoremstyle{definition}
\newtheorem{remark}[theorem]{Remark}
\renewcommand{\ker}{\mathrm{ker}}
\newcommand{\im}{\mathrm{im}}
\newcommand{\Per}{\mathrm{Per}}
\newcommand{\id}{\mathrm{Id}}
\newcommand{\diam}{\mathrm{diam}}
\newcommand{\Hom}{\mathrm{Hom}}
\title{Dynamics and eigenvalues in dimension zero}
\author[Hern\'andez-Corbato]{Luis Hern\'andez-Corbato}
\author[Nieves-Rivera]{David Jes\'us Nieves-Rivera}
\author[Ruiz del Portal]{Francisco R. Ruiz del Portal}
\author[S\'anchez-Gabites]{Jaime J. S\'anchez-Gabites}
\subjclass[2000]{37B10, 37B40, 37E99, 55N05}
\keywords{\v{C}ech homology, eigenvalues, adding machines, entropy.}
\thanks{The authors have been supported by MINECO grant MTM2015-63612-P. The second author is a beneficiary of a predoctoral contract financed by the Santander-UCM predoctoral aid program.}
\address{L. Hern\'andez Corbato \\ Departamento de Matem\'atica Aplicada a las TIC \\ Universidad Polit\'ecnica de Madrid \\  28031 Madrid \\ Spain}
\email{luis.hcorbato@upm.es}
\address{D. J. Nieves-Rivera \\ Departamento de \'Algebra, Geometr\'{\i}a y Topolog\'{\i}a\\ Universidad Complutense de Madrid \\ Plaza de Ciencias 3 \\ 28040 Madrid \\ Spain}
\email{davidjni@ucm.es}
\address{F. R. Ruiz del Portal \\ Departamento de \'Algebra, Geometr\'{\i}a y Topolog\'{\i}a\\ Universidad Complutense de Madrid \\ Plaza de Ciencias 3 \\ 28040 Madrid \\ Spain}
\email{rrportal@ucm.es}
\address{J. J. S{\'{a}}nchez-Gabites \\ Departamento de An\'alisis Econ\'omico (M\'etodos cuantitativos) \\ Facultad de Ciencias Econ\'omicas y Empresariales \\ Universidad Aut\'onoma de Madrid \\ 28049 Madrid \\ Spain}
\email{JaimeJ.Sanchez@uam.es}
\begin{document}

\begin{abstract}
Let \(X\) be a compact, metric and totally disconnected space and let \(f \colon X \rightarrow X\) be a continuous map. We relate the eigenvalues of \(f_*\colon \Check{H}_0(X; {\mathbb C}) \rightarrow \Check{H}_0(X;{\mathbb C})\) to dynamical properties of \(f\),  roughly showing that if the dynamics is complicated then every complex number of modulus different from 0, 1 is an eigenvalue. This stands in contrast with the classical Manning's inequality.
\end{abstract}

\maketitle

\section{Introduction}

Manning's theorem \cite{manning} essentially says that 
if $X$ is a compact manifold (or, more generally, an absolute neighborhood retract for metric spaces) and $f \colon X \rightarrow X$ is a continuous
map, then
\[
h(f) \geq \log(|\lambda|)
\]

\noindent for any eigenvalue $\lambda$ of $f_*\colon H_1(X; \mathbb{C})
\rightarrow H_1(X; \mathbb{C})$. Here \(h(f)\) stands for the topological entropy of \(f\) and \(H\) denotes singular homology with coefficients in $\mathbb{C}$. Motivated by this result, Shub stated the Entropy Conjecture which asks if Manning's theorem is true in all dimensions. In this paper we consider this problem in dimension zero, obtaining results that relate the eigenvalues of $f_*$ to certain detailed dynamical properties of the system. 

We will concentrate on $X$ compact and totally disconnected (and metrizable). The reason is the following. Since $0$--dimensional homological invariants measure, roughly, only connectedness properties, it is to be expected that the results that we obtain do not involve the dynamical behaviour \emph{within} the connected components of our phase space, but rather \emph{among} them. Thus we may as well collapse each component to a singleton, obtaining a compact, totally disconnected space, and consider the induced map in this quotient space.

The interesting cases arise when $X$ is infinite and has a very complicated topological structure, which is a common situtation in both discrete and continuous dynamical systems. The homology or cohomology theory that best suits the study of these spaces is not singular homology as in Manning's theorem, but rather \v{C}ech homology or even \v{C}ech cohomology. In dimension zero these have a relatively simple description. For an arbitrary compact space $Z$, its \v{C}ech cohomology group $\check{H}^0(Z;\mathbb{C})$ can be identified with the set of all locally constant maps $\varphi \colon Z \to \mathbb{C}$. Because coefficients are taken in $\mathbb{C}$, in this case $\check{H}^0(Z;\mathbb{C})$ is actually a vector space over $\mathbb{C}$. \v{C}ech homology, on the other hand, can be shown to be the dual vector space to \v{C}ech cohomology; that is, $\check{H}_0(Z;\mathbb{C}) = \Hom (\check{H}^0(Z;\mathbb{C}),\mathbb{C})$. 


The result for \v{C}ech cohomology is very simple to state.

\begin{proposition} \label{prop:coh} Let $X$ be compact, metric and totally disconnected space and let $f \colon X \to X$ be continuous. Consider the induced map $f^* \colon \check{H}^0(X;\mathbb{C}) \to \check{H}^0(X;\mathbb{C})$. Then every nonzero eigenvalue of $f^*$ is a root of unity.
\end{proposition}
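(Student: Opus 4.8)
The plan is to work directly with the description of $\check{H}^0(X;\mathbb{C})$ recalled above as the space of locally constant maps $\varphi\colon X\to\mathbb{C}$, on which $f^*$ acts by precomposition, $f^*\varphi=\varphi\circ f$. The one structural fact I need is that, because $X$ is compact, any locally constant $\varphi$ has finite image: the level sets $\varphi^{-1}(c)$ form an open partition of $X$, so only finitely many of them are nonempty. Thus every element of $\check{H}^0(X;\mathbb{C})$ takes only finitely many values, and this is what turns an a priori infinite-dimensional linear-algebra statement into a finite combinatorial one.

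Now suppose $\lambda\neq 0$ is an eigenvalue, witnessed by a nonzero locally constant $\varphi$ with $\varphi\circ f=\lambda\varphi$. Put $S:=\varphi(X)\subseteq\mathbb{C}$, a finite set containing at least one nonzero element (since $\varphi\not\equiv 0$). For every $x\in X$ we have $\lambda\varphi(x)=\varphi(f(x))\in S$, and as every element of $S$ is of the form $\varphi(x)$ this says $\lambda S\subseteq S$. Since multiplication by $\lambda$ is an injective self-map of $\mathbb{C}$ and $S$ is finite, in fact $\lambda S=S$, i.e. $z\mapsto \lambda z$ permutes $S$.

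Finally, pick $s_0\in S$ with $s_0\neq 0$. The forward orbit $\{\lambda^k s_0: k\ge 0\}$ lies in the finite set $S$, so there are integers $0\le k<\ell$ with $\lambda^k s_0=\lambda^\ell s_0$; cancelling the nonzero factor $\lambda^k s_0$ gives $\lambda^{\ell-k}=1$, so $\lambda$ is a root of unity. I do not anticipate any serious obstacle: once the identification of $\check{H}^0$ with locally constant functions and the finiteness of their value sets are in hand, the argument is elementary. The point worth stressing --- and the reason the rest of the paper is needed --- is that the corresponding statement for \v{C}ech homology does not follow by dualizing, since $\check{H}_0(X;\mathbb{C})$ is typically infinite-dimensional and the transpose of $f^*$ need not admit a basis of ``finitely supported'' eigenvectors; the homological side genuinely requires dynamical input.
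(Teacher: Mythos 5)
Your proof is correct. It shares the paper's starting point --- the identification of $\check{H}^0(X;\mathbb{C})$ with locally constant functions and the observation that compactness forces the value set $S=\varphi(X)$ to be finite --- but from there it takes a shorter route. The paper packages the relation $\varphi\circ f=\lambda\varphi$ as the statement that the level sets $U_i=\varphi^{-1}(c_i)$ form a \emph{dynamical partition} of $X$ (with $f(U_i)\subset U_j$ where $c_j=\lambda c_i$), then invokes its general itinerary machinery from Section~\ref{sec:partitions} to conclude that the itinerary of $f^k(p)$ is eventually $s$--periodic, and finally evaluates $\varphi$ along the orbit of a point $p$ with $\varphi(p)\neq 0$ to get $\lambda^{k+s}\varphi(p)=\lambda^k\varphi(p)$. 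You instead observe directly that $\lambda S\subseteq S$ (hence $\lambda S=S$ by finiteness and injectivity of multiplication by $\lambda\neq 0$) and run the pigeonhole argument on the orbit of a nonzero value $s_0$ under multiplication by $\lambda$ inside the finite set $S$, never iterating $f$ on points at all. The two arguments are really the same pigeonhole in different clothes --- the paper's $f(U_i)\subset U_j$ with $c_j=\lambda c_i$ is exactly your $\lambda S\subseteq S$ read on index sets --- but yours is self-contained and more elementary, whereas the paper's phrasing has the side benefit of exhibiting the eigenfunction as producing a dynamical partition, a notion it needs throughout the rest of the paper. Your closing remark about why the homological statement does not follow by dualization is also accurate and is precisely the point of Theorem~\ref{thm:intro}.
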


In particular, the inequality of Manning's theorem holds trivially since it just says that $h(f) \geq 0$. The scenario changes radically, however, when one considers \v{C}ech homology:

\begin{theorem} \label{thm:intro} Let $X$ be compact, metric and totally disconnected space and let $f \colon X \to X$ be continuous. Consider the induced map $f_* \colon \check{H}_0(X;\mathbb{C}) \to \check{H}_0(X;\mathbb{C})$. Then the following statements are equivalent:
\begin{itemize}
	\item[({\it i}\/)] There exists $\lambda \in \mathbb{C}$ with $|\lambda| \neq 0, 1$ that is not an eigenvalue of $f_*$.
	\item[({\it ii}\/)] No $\lambda \in \mathbb{C}$ with $|\lambda| \neq 0, 1$ is an eigenvalue of $f_*$.
	\item[({\it iii}\/)] $(X, f)$ admits dynamical $\epsilon$--partitions for every $\epsilon > 0$.
\end{itemize}
\end{theorem}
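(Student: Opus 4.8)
The plan is to recast everything in terms of surjectivity of $f^{\ast}-\lambda\,\id$ on \v{C}ech cohomology, and then treat the three implications separately; the real content lies in $(i)\Rightarrow(iii)$. Since $\check H_{0}(X;\mathbb C)=\Hom(\check H^{0}(X;\mathbb C),\mathbb C)$ with $f_{\ast}$ the transpose of $f^{\ast}$, and the kernel of the transpose of a linear map is the annihilator of its image, a number $\lambda$ is an eigenvalue of $f_{\ast}$ \emph{if and only if} $f^{\ast}-\lambda\,\id$ fails to be surjective on $\check H^{0}(X;\mathbb C)$. Concretely, identifying $\check H_{0}$ with the space of finitely additive $\mathbb C$--valued measures on the Boolean algebra of clopen subsets of $X$ (so that $f_{\ast}\mu=\mu\circ f^{-1}$), an eigenvector for $\lambda$ is a nonzero such $\mu$ with $\mu(f^{-1}C)=\lambda\,\mu(C)$ for every clopen $C$. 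The implication $(ii)\Rightarrow(i)$ is immediate.

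For $(iii)\Rightarrow(ii)$: call a clopen partition $\mathcal P$ of $X$ \emph{$f$--compatible} if $\mathcal P$ refines $f^{-1}\mathcal P$, i.e.\ $f$ carries each piece of $\mathcal P$ into a piece of $\mathcal P$. Such partitions are stable under common refinement, so $(iii)$ makes them cofinal and $X=\varprojlim_{\mathcal P}\mathcal P$; hence $\check H^{0}(X;\mathbb C)=\varinjlim_{\mathcal P}\mathbb C^{\mathcal P}$ and $f^{\ast}=\varinjlim_{\mathcal P} g_{\mathcal P}^{\ast}$, where $g_{\mathcal P}\colon\mathcal P\to\mathcal P$ is the self--map induced by $f$ (well defined precisely because $\mathcal P$ is $f$--compatible) and $g_{\mathcal P}^{\ast}(\varphi)=\varphi\circ g_{\mathcal P}$. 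As in Proposition~\ref{prop:coh} at the finite level, the nonzero eigenvalues of the $0$--$1$ matrix $g_{\mathcal P}^{\ast}$ are roots of unity: $g_{\mathcal P}$ permutes its periodic points and is pre--periodic (nilpotent after quotienting out that permutation) elsewhere. Thus $g_{\mathcal P}^{\ast}-\lambda\,\id$ is bijective whenever $|\lambda|\neq0,1$, and a direct limit of surjections is surjective, so $f^{\ast}-\lambda\,\id$ is surjective and $(ii)$ holds.

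The hard implication $(i)\Rightarrow(iii)$ I would prove contrapositively: assuming $(X,f)$ admits no dynamical $\epsilon_{0}$--partition for some $\epsilon_{0}>0$, I will show that every $\lambda$ with $|\lambda|\neq0,1$ is an eigenvalue of $f_{\ast}$. Fix a clopen partition $\mathcal Q$ of mesh $<\epsilon_{0}$ and set $\mathcal Q_{n}=\bigvee_{i=0}^{n}f^{-i}\mathcal Q$. If the increasing sequence $(\mathcal Q_{n})$ stabilized at some $\mathcal Q_{N}$, then $\mathcal Q_{N}=\mathcal Q\vee f^{-1}\mathcal Q_{N}$ would be an $f$--compatible clopen partition of mesh $<\epsilon_{0}$, contradicting the hypothesis; hence $\mathcal Q_{n}$ refines $\mathcal Q_{n-1}$ strictly for every $n$. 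Therefore the forward $\mathcal Q$--itinerary map onto the closed, shift--invariant set $Z\subseteq\mathcal Q^{\mathbb N}$ of realized itineraries is a factor map $q\colon(X,f)\to(Z,\sigma)$, and $Z$ is an \emph{infinite} subshift because its complexity function $n\mapsto\#\mathcal Q_{n-1}$ is strictly increasing. Now a transfer lemma reduces the problem: if $q\colon(X,f)\to(Z,g)$ is any factor map and $|\lambda|\neq0,1$, then every eigenvalue $\lambda$ of $g_{\ast}$ is an eigenvalue of $f_{\ast}$; equivalently, surjectivity of $f^{\ast}-\lambda\,\id$ forces that of $g^{\ast}-\lambda\,\id$. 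Indeed, given locally constant $\psi$ on $Z$, solve $\varphi\circ f-\lambda\varphi=\psi\circ q$ with $\varphi$ locally constant on $X$; iterating and using $q\circ f=g\circ q$ yields $\lambda^{n}\bigl(\varphi(x)-\varphi(x')\bigr)=\varphi(f^{n}x)-\varphi(f^{n}x')$ for all $n$ whenever $q(x)=q(x')$, and since $\varphi$ takes finitely many values this forces $\varphi(x)=\varphi(x')$ (directly when $|\lambda|>1$; when $|\lambda|<1$ the right side tends to $0$, hence is eventually $0$, so the left side vanishes), whence $\varphi$ descends to a locally constant solution on $Z$. It remains to handle infinite subshifts.

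This last step is where I expect the main obstacle: one must show that every infinite subshift $(Z,\sigma)$ has every $\lambda$ with $|\lambda|\neq0,1$ as an eigenvalue of $\sigma_{\ast}$. An infinite subshift has a non--isolated point $\zeta$, and then the tree of prefixes of $\zeta$ has branch points (prefixes admitting two distinct one--letter right extensions inside the language $L(Z)$) at infinitely many lengths. I would build the eigenmeasure $\mu$ — a nonzero finitely additive $\mu$ with $\mu([w])=\sum_{wa\in L(Z)}\mu([wa])$ and $\lambda\,\mu([w])=\sum_{bw\in L(Z)}\mu([bw])$ — by a recursion organised along $\zeta$: away from branch points the relevant values are forced, while at each branch point the extra right extension supplies a free parameter that keeps $\mu$ from collapsing to $0$ and absorbs the ``left--extension'' constraint. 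It is essential that $\mu$ be only finitely additive, since along $\zeta$ the values $\mu([w])$ typically grow without bound when $|\lambda|>1$ (already visible in the elementary subshift consisting of $0^{\infty}$ together with the sequences $0^{j}10^{\infty}$, $j\ge0$). The same phenomenon can be viewed cohomologically — $\sigma^{\ast}$ tends to push a locally constant function to a higher ``level'', i.e.\ to require more coordinates, so that $\sigma^{\ast}-\lambda\,\id$ cannot be onto — but making this precise for \emph{every} infinite subshift, uniformly in $\lambda$ on both $|\lambda|<1$ and $|\lambda|>1$ and with no recurrence assumption on $\zeta$ (an infinite subshift may consist entirely of eventually periodic points), is the delicate core of the argument. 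Everything else is bookkeeping with direct and inverse limits and with the spectral theory of $0$--$1$ matrices attached to self--maps of finite sets.
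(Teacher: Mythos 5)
Your architecture is sound up to the last step, and parts of it are genuinely different from ours: the implication $(iii)\Rightarrow(ii)$ via writing $\check H^0(X)$ as a direct limit of the finite-dimensional spaces attached to dynamical partitions and observing that each $g_{\mathcal P}^*-\lambda\,\id$ is bijective is a clean substitute for our route through conditions $(iv)$ and $(v)$; the strict-refinement argument showing that $\bigvee_{i=0}^n f^{-i}\mathcal Q$ cannot stabilize is correct; and your transfer lemma for factor maps is valid (it is, in essence, the computation inside Proposition \ref{prop:finite} — solve the coboundary equation $\varphi\circ f-\lambda\varphi=\psi\circ q$ using surjectivity of $f^*-\lambda\,\id$ and iterate along orbits — repackaged as a descent statement, and it nicely subsumes the role our Lemma \ref{lem:eigen} plays for the non-surjective case).

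The genuine gap is the step you yourself flag: the claim that \emph{every} infinite one-sided subshift $(Z,\sigma)$ has every $\lambda$ with $|\lambda|\neq 0,1$ as an eigenvalue of $\sigma_*$. This is not a side lemma — since one-sided subshifts are positively expansive, this claim is exactly the contrapositive of the hard implication specialized to subshifts (compare Corollary \ref{cor:entropy}), so your reduction has relocated all of the difficulty rather than removed it. What you offer for it is a heuristic: a recursion along a non-isolated point $\zeta$ in which ``free parameters at branch points absorb the left-extension constraints.'' Concretely, you must produce a nonzero $\mu$ on the language $L(Z)$ satisfying simultaneously $\mu([w])=\sum_{wa\in L}\mu([wa])$ on the \emph{entire} tree of $L(Z)$ (so that $\mu$ defines a compatible element of $\varprojlim \mathbb{C}^{L_n}=\check H_0(Z)$) and $\sum_{bw\in L}\mu([bw])=\lambda\,\mu([w])$ for every $w$; the second family of constraints at a word $w$ involves all left extensions $bw$, which need not lie anywhere near the spine of $\zeta$, and no argument is given that the system is consistent or that its solution space is nonzero — for either regime $|\lambda|>1$ or $|\lambda|<1$, or when $\zeta$ is non-recurrent. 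Our proof fills precisely this hole by a different mechanism: surjectivity of $f^*-\lambda\,\id$ yields a locally constant $\psi$ with $\psi\circ f-\lambda\psi=\chi_V$, whose expansion along forward (resp.\ backward) orbits as a power series in $\lambda^{-1}$ (resp.\ $\lambda$), combined with the arithmetic uniqueness Lemma \ref{lem:unique} for sequences in $S_r$, bounds the number of itineraries with respect to sets $V$ disjoint from $fV,\dots,f^rV$; the set $\Per_r(f)$, which such $V$ cannot see, is then handled separately via its stability with respect to clopen sets and the two extension lemmas. Any completion of your subshift step would, we suspect, end up reconstructing this (or an equivalent) quantitative input.
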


A dynamical $\epsilon$--partition $\mathcal U$ is a finite partition of $X$ into clopen (that is, simultaneously closed and open) subsets of diameter smaller than $\epsilon$ and such that the image of any element of $\mathcal U$ is completely contained in another (not necessarily different) element of $\mathcal U$.

The existence of a dynamical $\epsilon$--partition $\mathcal U$ implies that if $x, y$ belong to the same element of $\mathcal{U}$ then their forward images never separate a distance greater than $\epsilon$. Thus, if $\delta$ is a Lebesgue number for $\mathcal{U}$ (for instance, if $\delta$ is smaller than the minimum distance among elements of $\mathcal U$), it follows that for every pair of points such that $d(x, y) < \delta$ then $d(f^n(x), f^n(y)) < \epsilon$ for every $n \ge 1$. That is, $f$ is not positively expansive unless $X$ is finite. This argument also shows that the number of $\epsilon$--distinguishable positive semiorbits is finite and, in particular, the entropy of $f$ is zero. Thus, we have the following corollary of Theorem \ref{thm:intro}:

\begin{corollary}\label{cor:entropy}
Let $X$ be a compact, metric and totally disconnected space and $f \colon X \to X$ continuous. If the topological entropy of $f$ is nonzero or $f$ is positively expansive and $X$ is infinite, then every $\lambda \in \mathbb C$ with $|\lambda| \neq 0, 1$ is an eigenvalue of $f_* \colon \check{H}_0(X;\mathbb{C}) \to \check{H}_0(X;\mathbb{C})$.
\end{corollary}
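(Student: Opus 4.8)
\section*{Proof proposal for Corollary~\ref{cor:entropy}}

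The plan is to deduce the statement from Theorem~\ref{thm:intro} by contraposition. The assertion ``every $\lambda\in\mathbb{C}$ with $|\lambda|\neq 0,1$ is an eigenvalue of $f_*$'' is precisely the negation of item~$(i)$ of that theorem, so by the equivalence $(i)\Leftrightarrow(iii)$ it is enough to show that, under either hypothesis, $(X,f)$ \emph{fails} to admit a dynamical $\epsilon$--partition for at least one value of $\epsilon>0$. Note that negating $(iii)$ requires failure at only a single $\epsilon$, since $(iii)$ asks for such partitions at all $\epsilon$.

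The core of the argument is the observation already recorded before the statement: if $\mathcal{U}$ is a dynamical $\epsilon$--partition and $x,y$ lie in a common element of $\mathcal{U}$, then $f^n(x)$ and $f^n(y)$ lie in a common element for every $n\geq 0$, hence $d(f^n(x),f^n(y))<\epsilon$ for all $n\geq 0$. From this I would draw two conclusions. First, a set of points whose length-$n$ orbit segments are pairwise $\epsilon$--distinguishable meets each element of $\mathcal{U}$ at most once, so its cardinality is at most $|\mathcal{U}|$ regardless of $n$; consequently $h(f,\epsilon)=0$. Second, if $\delta>0$ is a Lebesgue number of $\mathcal{U}$ (for instance the least distance between distinct elements), then $d(x,y)<\delta$ forces $x,y$ into a common element, and hence $d(f^n(x),f^n(y))<\epsilon$ for all $n\geq 0$.

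It remains to choose $\epsilon$ suitably in each case. If $h(f)>0$, then, since $h(f)=\sup_{\epsilon>0}h(f,\epsilon)$, some $\epsilon>0$ satisfies $h(f,\epsilon)>0$; by the first conclusion there is no dynamical $\epsilon$--partition, so $(iii)$ fails. If instead $f$ is positively expansive with expansivity constant $c>0$ and $X$ is infinite, pick $\epsilon<c$; because an infinite compact metric space contains distinct points at distance less than any prescribed $\delta>0$, the second conclusion produces a pair of distinct points whose orbit is never $\epsilon$--separated, contradicting positive expansivity. So again no dynamical $\epsilon$--partition exists and $(iii)$ fails. In both cases $(i)$ fails by Theorem~\ref{thm:intro}, which is the assertion of the corollary.

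I do not expect a genuine obstacle here: the corollary essentially repackages the remarks preceding it together with the equivalence in Theorem~\ref{thm:intro}. The only points that require a little care are keeping the quantifier over $\epsilon$ straight when negating $(iii)$, and invoking the infiniteness and compactness of $X$ to supply the pair of nearby distinct points needed in the positively expansive case.
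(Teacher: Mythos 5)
Your argument is correct and is essentially the paper's own: the authors derive the corollary by exactly the same contraposition through the equivalence $(i)\Leftrightarrow(iii)$ of Theorem~\ref{thm:intro}, using the observation (stated in the paragraph preceding the corollary) that a dynamical $\epsilon$--partition bounds the number of $\epsilon$--distinguishable semiorbits by the number of partition elements and prevents $\delta$--close points from ever $\epsilon$--separating. Your write-up just makes the quantifier bookkeeping over $\epsilon$ and the use of compactness plus infiniteness of $X$ explicit.
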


In particular, the inequality of Manning's theorem certainly does not hold in general: for any dynamical system with positive but finite topological entropy we have $\sup\ \log(|\lambda|) = +\infty$ as $\lambda$ ranges over the eigenvalues of $f_*$.

Actually, the existence of dynamical $\epsilon$--partitions for every $\epsilon > 0$ is a very stringent condition; so much so that it leads to a fairly detailed description of the dynamical system $(X,f)$:

\begin{theorem} \label{thm:intro2} Let $X$ be compact, metric and totally disconnected space and let $f \colon X \to X$ be continuous. The following statements are equivalent:
\begin{itemize}
	\item[({\it i}\/)] $(X, f)$ admits dynamical $\epsilon$--partitions for every $\epsilon > 0$.
	\item[({\it ii}\/)] The $\omega$--limit of every point in $X$ is either a periodic orbit or an adding machine; moreover, these are stable with respect to clopen sets.
\end{itemize}
\end{theorem}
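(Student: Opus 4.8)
The plan is to establish the two implications separately.

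\emph{(i) $\Rightarrow$ (ii).} Fix $x \in X$. For each $m$ choose a dynamical $(1/m)$-partition and replace it by its common refinement with all previously chosen ones; since a common refinement of dynamical partitions is again dynamical, this yields a nested sequence $\mathcal U_1 \succeq \mathcal U_2 \succeq \cdots$ of dynamical partitions with $\diam \mathcal U_m < 1/m$. Because $f$ sends each piece of $\mathcal U_m$ into a single piece, the itinerary of $x$ through $\mathcal U_m$ is driven by a self-map of a finite set and is therefore eventually periodic, entering a cycle of $q_m$ pieces whose union $W_m$ is clopen and positively invariant. One checks that $\omega(x) \subseteq W_m$, that $\omega(x)$ meets each of the $q_m$ pieces of that cycle (the orbit visits each of them infinitely often), that the $W_m$ decrease, and that $\bigcap_m W_m = \omega(x)$ — a point of $\bigcap_m W_m$ lies within $1/m$ of $\omega(x)$ for every $m$, since $\omega(x)$ meets the piece of $\mathcal U_m$ containing it. Stability with respect to clopen sets follows at once: a clopen neighborhood of $\omega(x)$ contains some $W_m$, which is positively invariant. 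Restricting the $W_m$-cycles to $\omega(x)$ and using $f(\omega(x)) = \omega(x)$ gives, for every $m$, a clopen partition of $\omega(x)$ into $q_m$ pieces that $f$ permutes cyclically and surjectively, with $q_m \mid q_{m+1}$ and diameters tending to $0$; the standard inverse-limit characterization of adding machines then shows that $(\omega(x), f|_{\omega(x)})$ is a periodic orbit (when the $q_m$ stay bounded, i.e. $\omega(x)$ is finite) or an adding machine (when $q_m \to \infty$).

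\emph{(ii) $\Rightarrow$ (i).} Fix $\epsilon > 0$. Every $\omega$-limit set $L$ is minimal, and as a periodic orbit or an adding machine it carries a canonical finite clopen partition into pieces of diameter $< \epsilon/2$ that $f$ permutes cyclically; thickening these pieces inside $X$ (or combining the thickening with the clopen stability of $L$) produces a clopen positively invariant neighborhood $V_L$ of $L$ that itself admits a dynamical $\epsilon$-partition. The open sets $U_L := \bigcup_{n \ge 0} f^{-n}(V_L)$ cover $X$, because the orbit of any point eventually enters the neighborhood $V_{\omega(x)}$ of its $\omega$-limit set; by compactness finitely many of them cover, so there are minimal sets $L_1, \dots, L_m$ and an integer $N$ with $f^N(X) \subseteq V := V_{L_1} \cup \dots \cup V_{L_m}$. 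From the $V_{L_i}$ one assembles a dynamical $\epsilon$-partition $\mathcal P$ of $(V, f|_V)$, and then extends it to $X$ by coding the transient part: a point $y \notin V$ is labelled by its itinerary, up to its first entrance time into $V$ (which is at most $N$), through a fixed fine clopen partition of $X \setminus V$, together with the piece of $\mathcal P$ it first lands in. Each label-class is clopen, has diameter $< \epsilon$, and — one coordinate of the itinerary being consumed at each application of $f$ — is carried by $f$ into a single class, so we obtain a dynamical $\epsilon$-partition of $X$.

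The step I expect to be the crux is the one hidden in ``assembles a dynamical $\epsilon$-partition $\mathcal P$ of $V$'': a priori the neighborhoods $V_{L_1}, \dots, V_{L_m}$ overlap, and the union of their local dynamical partitions is merely a dynamical $\epsilon$-\emph{cover} of $V$, which in general does not refine to a dynamical partition. Resolving this will require choosing the $L_i$ and the $V_{L_i}$ coherently — using that distinct $\omega$-limit sets are pairwise disjoint (because they are minimal) together with the clopen stability in (ii) to replace the $V_{L_i}$ by a pairwise disjoint family of positively invariant clopen sets that still traps every orbit. Neither the recognition of the adding-machine structure in (i) $\Rightarrow$ (ii) nor the itinerary-coding extension should present comparable difficulty.
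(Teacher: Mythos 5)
Your proof of $(i)\Rightarrow(ii)$ is correct and is essentially the paper's argument in a more hands-on form: the nested dynamical partitions $\mathcal U_m$ realize $(X,f)$ as an inverse limit of finite dynamical systems as in Proposition \ref{prop:finite_liminv}, and your cycle-unions $W_m$ are the sets $\pi_m^{-1}(P_m)$ of Remark \ref{rmk:omegalimit}. No issue there.

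The genuine gap is the one you flag yourself in $(ii)\Rightarrow(i)$, and the repair you sketch does not work as stated. You propose to replace $V_{L_1},\dots,V_{L_m}$ by a pairwise disjoint family of clopen positively invariant sets ``that still traps every orbit'', invoking the pairwise disjointness of minimal sets. But $Y=\bigcup_p\omega(p)$ typically contains infinitely many minimal sets besides the finitely many $L_1,\dots,L_m$ chosen by compactness; the inclusion $Y\subset V_{L_1}\cup\dots\cup V_{L_m}$ holds only for the original neighbourhoods, and after shrinking $V_{L_i}$ to disjoint sets $V'_{L_i}$ a minimal set $L\notin\{L_1,\dots,L_m\}$ that lay inside some $V_{L_i}$ may miss $\bigcup_i V'_{L_i}$ entirely, so orbits with $\omega$-limit $L$ are no longer trapped. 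Nor can one keep the overlapping $V_{L_i}$ and pass to a common refinement of their partitions: if $A$ is a piece of the partition of $V_{L_1}$ meeting both $V_{L_1}\cap V_{L_2}$ and $V_{L_1}\setminus V_{L_2}$, then $f(A\setminus V_{L_2})$ may be split between $\tau(A)\cap V_{L_2}$ and $\tau(A)\setminus V_{L_2}$, so the refined partition need not be dynamical. So the crux you identify is real and remains open in your write-up.

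One way to close it avoids gluing altogether: verify condition $(v)$ of Theorem \ref{teo:epsilonparticiones}, which gives $(iv)$ and hence $(iii)=(i)$ purely via Lemma \ref{lem:refinement}, without the eigenvalue machinery. Given a clopen $U\subset X$ with both $U$ and $X\setminus U$ nonempty, set $\delta:=d(U,X\setminus U)>0$. Each minimal set $L$ carries a dynamical $\delta$-partition, which Lemma \ref{lem:extend} extends to a dynamical $\delta$-partition $\mathcal Q_L$ of a clopen positively invariant neighbourhood $W_L$; every element of $\mathcal Q_L$ lies entirely in $U$ or in $X\setminus U$, so points of $W_L$ realize only finitely many $U$-itineraries. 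Since $X=\bigcup_L\bigcup_{n\ge 0}f^{-n}(W_L)$ and each $\bigcup_n f^{-n}(W_L)$ is an increasing union of open sets, compactness yields $f^N(X)\subset W_{L_1}\cup\dots\cup W_{L_m}$ for some $N$; hence every $U$-itinerary is one of at most $2^N$ prefixes followed by one of finitely many tails, and is therefore one of finitely many. Your transient-coding step at the end is fine (it is Lemma \ref{lem:trim} iterated), but it only becomes usable after a dynamical $\epsilon$-partition of an invariant neighbourhood of $Y$ has actually been produced.
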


Recall that a closed invariant subset $Y \subset X$ is called Lyapunov stable if it has a basis of positively invariant neighbourhoods. We introduce \emph{stability with respect to clopen sets} as an even stronger condition: it means that $Y$ has a basis of \emph{clopen} positively invariant neighbourhoods. This condition will arise naturally as we prove Theorem \ref{thm:intro}.

The characterization of Theorem \ref{thm:intro2} is closely related to work by Buescu, Kulczyki and Stewart \cite{buescu2}, \cite{buescu1}.


The paper is organized as follows. Section \ref{sec:partitions} presents elementary definitions and results about partitions. Section \ref{sec:coh} contains the proof of Proposition \ref{prop:coh}. Theorem \ref{thm:intro} will be proved in Section \ref{sec:eigen}. Theorem \ref{thm:intro2} will be proved in Section \ref{sec:characterization}. We thank the referee for suggesting an approach that allowed us to shorten the proofs in this section. For completeness we have included a brief appendix that contains basic definitions and results about \v{C}ech homology and cohomology.


\section{Elementary definitions and results about partitions} \label{sec:partitions}

Here we gather some definitions and simple results about partitions that we will use frequently throughout the paper. As usual, $X$ denotes a compact, totally disconnected metric space and $f \colon X \to X$ is a continuous map. We recall that such a space $X$ has a topological basis of clopen sets. We will also tacitly assume $X$ to be metrizable so that we can measure the smallness of our partitions with the numerical parameter $\epsilon$. (This assumption can be easily dispensed with at the cost of the additional burden of having to speak of cofinal families of partitions within the family of all open coverings of $X$.)

The pair $(X, f)$ is a (semi)-dynamical system. An action map $\pi$ between dynamical systems $(X, f)$ and $(Y, g)$ is a continuous map $\pi \colon X \to Y$ such that $g \circ \pi = \pi \circ f$, that is, $\pi$ takes $f$--orbits onto $g$--orbits. A surjective action map is typically called a semiconjugacy and if the action map is a homeomorphism then it is a conjugacy between dynamical systems.

A partition of $X$ is a collection $\mathcal{U}$ of pairwise disjoint subsets of $X$ whose union is equal to $X$. We remark once and for all that \emph{the partitions in this work will always be finite and consist of clopen sets}, although for the sake of brevity we will not state this explicitly in the sequel. Since $X$ is compact, any such partition is finite. We shall say that $\mathcal U$ is an \emph{$\epsilon$--partition} if all its elements have diameter smaller than $\epsilon$. In particular, letting $\delta$ be the minimum distance between pairs of elements of $\mathcal{U}$ gives a Lebesgue number for the partition; that is, any set of diameter less than $\delta$ is contained in precisely one member of $\mathcal{U}$. As a consequence, if $\mathcal{V}$ is a $\delta'$--partition and $\delta' < \delta$ then $\mathcal{V}$ refines $\mathcal{U}$.

A partition $\mathcal U$ is called \emph{dynamical} if for every $U \in \mathcal U$ there exists $U' \in \mathcal{U}$ such that $f(U) \subset U'$. Notice that then $f$ induces a map $\tau : \mathcal{U} \to \mathcal{U}$ defined by $\tau(U) := U'$. If $f$ is surjective clearly this map $\tau$ is surjective (hence a bijection, since $\mathcal{U}$ is finite). The carrier map $i : X \to \mathcal{U}$ associates to every $p \in X$ the member $U$ of $\mathcal{U}$ to which $p$ belongs. This carrier map is an action map (in fact, it is a semiconjugacy) between the dynamics given by $f$ on $X$ and that given by $\tau$ on $\mathcal{U}$; that is, $\tau \circ i = i \circ f$. Observe that $i$ takes the $f$--orbit of a point $p \in X$ onto the $\tau$--orbit of $i(p)$ in $\mathcal{U}$. For notational convenience, on occasion we will index the elements of $\mathcal{U}$ with some index set $\Lambda$ and then the map $\tau$ can equivalently be thought of as a map $\tau \colon \Lambda \to \Lambda$.



 Given two clopen partitions $\mathcal{U}$ and $\mathcal{U}'$, their common refinement $\mathcal{U} \vee \mathcal{U}'$ is defined as the partition whose elements are $U \cap U'$ where $U \in \mathcal{U}$ and $U' \in \mathcal{U}'$ (of course, many of these sets will usually be empty). Clearly $\mathcal{U} \vee \mathcal{U}'$ is also a clopen partition. The following are easy to prove:
\begin{itemize}
	\item If $\mathcal{U}$ and $\mathcal{U}'$ are dynamical partitions, then so is $\mathcal{U} \vee \mathcal{U}'$.
	\item If $\mathcal{U}$ or $\mathcal{U}'$ is an $\epsilon$--partition, then so is $\mathcal{U} \vee \mathcal{U}'$.
	\item More generally, suppose that $\mathcal{U}$ and $\mathcal{U}'$ are such that for every $p \in X$ the element in either $\mathcal{U}$ or $\mathcal{U}'$ (or both) that contains $p$ has diameter smaller than $\epsilon$. Then again $\mathcal{U} \vee \mathcal{U}'$ is an $\epsilon$--partition.
\end{itemize}

Let $\mathcal{U} = \{U_0,\ldots,U_n\}$ be a partition of $X$. The itinerary $I$ of a point $p \in X$ with respect to the partition $\mathcal{U}$ is the sequence $I = a_0 a_1 \ldots$ where each $a_k$ is the label $i$ of the set $U_i$ to which $f^k(p)$ belongs; that is, if $f^k(p) \in U_i$ we set $a_k = i$. Notice that the itinerary of $f(p)$ is then $a_1 a_2 \ldots$, which can be thought of as the result of deleting the first symbol in $I$ and shifting everything one position to the left, so that $a_1$ now occupies the $0$th position and so on. This is nothing but the well known shift map acting on $I$. We shall denote this map by $\sigma$.

We will often write ``there are only finitely many itineraries with respect to $\mathcal{U}$'' to mean that the set $\mathcal{I}$ of itineraries with respect to $\mathcal{U}$ that are \emph{actually realized} by points in $X$ is finite. If $p$ realizes an itinerary $I$ then $f(p)$ realizes $\sigma(I)$; thus, $\sigma$ maps $\mathcal{I}$ into $\mathcal{I}$. The following two assertions should be clear:
\begin{itemize}
	\item If $\mathcal{U}$ and $\mathcal{U}'$ are two partitions such that there are only finitely many itineraries with respect to each of them, the same is true of $\mathcal{U} \vee \mathcal{U}'$.
	\item If $\mathcal{U}$ is a dynamical partition, then there are only finitely many itineraries with respect to $\mathcal{U}$ (this uses our tacit convention that all partitions that we consider are finite).
\end{itemize}

Suppose that there are only finitely many itineraries with respect to a partition $\mathcal{U}$ and let $\mathcal{I}$ be as before the set of those itineraries. Consider the shift map $\sigma \colon \mathcal{I} \to \mathcal{I}$ and the nested sequence of images $\mathcal{I} \supset \sigma(\mathcal{I}) \supset \sigma^2(\mathcal{I}) \supset \ldots$ Since $\mathcal{I}$ is finite by assumption, there exists $k$ such that $\sigma^k(\mathcal{I}) = \sigma^{k+1}(\mathcal{I}) = \ldots$ Let us call $\mathcal{I}_0$ this set onto which the images of $\sigma$ stabilize. Then the restriction $\sigma|_{\mathcal{I}_0} \colon \mathcal{I}_0 \to \mathcal{I}_0$ is surjective and, since $\mathcal{I}_0$ is finite, a bijection. In particular there exists $s$ such that $(\sigma|_{\mathcal{I}_0})^s = {\rm id}$. This amounts to saying that every itinerary in $\mathcal{I}_0$ is periodic of (not necessarily minimal) period $s$. Thus we have proved the following:

\begin{itemize}
	\item If there are only finitely many itineraries with respect to $\mathcal{U}$, there exist $k$ and $s$ such that for any $p \in X$ the itinerary of $f^k(p)$ is $s$--periodic.
\end{itemize}

Notice that the set of itineraries of points in $f(X)$ is precisely $\sigma(\mathcal{I})$. If $f$ is surjective then $f(X) = X$ and so $\sigma(\mathcal{I}) = \mathcal{I}$ must hold. Then we may take $k = 0$ in the above arguments and conclude that every itinerary is $s$--periodic. In particular $f^s(U_i) \subset U_i$ and, because the $U_i$ partition $X$ and $f$ is surjective, we must actually have $f^s(U_i) = U_i$.

This finiteness condition plays an important role in constructing dynamical partitions, as attested by the following lemma:

\begin{lemma} \label{lem:refinement} Let $\mathcal{V} = \{V_0, \ldots, V_n\}$ be a clopen partition of $X$ and suppose that there are only finitely many itineraries with respect to $\mathcal{V}$. Let $\mathcal{I}$ be the set of those itineraries. Consider the collection $\mathcal{U} := \{U(I) : I \in \mathcal{I}\}$, where $U(I)$ contains the points of $X$ that follow the itinerary $I$. Then $\mathcal{U}$ is a dynamical partition of $X$ that refines $\mathcal{V}$.

Conversely, if $\mathcal{V}$ has a refinement $\mathcal{U}$ that is a dynamical partition, then the set of itineraries with respect to $\mathcal{V}$ is finite.
\end{lemma}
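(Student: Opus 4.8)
The plan is to treat the two implications separately; the forward direction contains the only real content.

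For the forward implication, I would begin by describing $U(I)$ concretely. If $I = a_0 a_1 a_2 \ldots$, then a point realizes $I$ precisely when $f^k(p) \in V_{a_k}$ for every $k \ge 0$, so $U(I) = \bigcap_{k \ge 0} f^{-k}(V_{a_k})$. Each $f^{-k}(V_{a_k})$ is clopen (preimage of a clopen set under a continuous map), so $U(I)$ is closed. The one point where the finiteness hypothesis is genuinely needed is to see that $U(I)$ is also open. The members of $\mathcal U$ are pairwise disjoint: two distinct realized itineraries disagree at some coordinate $k$, and then the corresponding sets lie in two different members of the partition $\{f^{-k}(V_0),\ldots,f^{-k}(V_n)\}$; and they cover $X$ since every point realizes some itinerary. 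Hence $\mathcal U$ is a \emph{finite} partition of $X$ into closed sets, and each member, being the complement of the union of the remaining finitely many closed members, is open. So $\mathcal U$ is a clopen partition. It refines $\mathcal V$ because $p \in U(I)$ forces $p = f^0(p) \in V_{a_0}$, i.e. $U(I) \subset V_{a_0}$. Finally $\mathcal U$ is dynamical: if $p \in U(I)$ then $f(p)$ realizes $\sigma(I)$, which again belongs to $\mathcal I$, so $f(U(I)) \subset U(\sigma(I))$ and the induced map is $\tau(U(I)) = U(\sigma(I))$.

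For the converse, suppose $\mathcal V$ has a refinement $\mathcal U$ that is a dynamical partition, with induced self-map $\tau \colon \mathcal U \to \mathcal U$ (so $f(U) \subset \tau(U)$) and carrier map $i \colon X \to \mathcal U$. Since $\mathcal U$ refines $\mathcal V$, each $U \in \mathcal U$ is contained in a unique member of $\mathcal V$; let $j(U) \in \{0,\ldots,n\}$ be its label. For $p \in X$ set $U_0 := i(p)$. Iterating $f(U) \subset \tau(U)$ gives $f^k(p) \in \tau^k(U_0) \subset V_{j(\tau^k(U_0))}$ for all $k$, so the $\mathcal V$-itinerary of $p$ is exactly $j(U_0)\, j(\tau(U_0))\, j(\tau^2(U_0)) \ldots$ In particular it depends only on $U_0 = i(p)$, and since $\mathcal U$ is finite there are at most $|\mathcal U|$ realized itineraries with respect to $\mathcal V$, so this set is finite.

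The main obstacle is precisely the openness of each $U(I)$ in the first part; the refinement and dynamical properties, and the entire converse, are routine bookkeeping. I would make a point of emphasizing in the write-up that finiteness of $\mathcal I$ is essential here: without it the sets $U(I)$ are only guaranteed to be closed, and $\mathcal U$ need not be a clopen partition in the sense used throughout the paper.
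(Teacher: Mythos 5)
Your argument is correct and follows essentially the same route as the paper: you describe $U(I)$ as $\bigcap_{k\ge 0} f^{-k}(V_{a_k})$, deduce closedness, use finiteness of $\mathcal I$ to conclude each $U(I)$ is open as the complement of a finite union of closed sets, and verify the refinement and dynamical properties via the shift. Your spelled-out converse (the itinerary of $p$ is determined by the element of $\mathcal U$ containing $p$, so there are at most $|\mathcal U|$ itineraries) is exactly the routine argument the paper leaves to the reader.
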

\begin{proof} Clearly $\mathcal{U}$ is a finite (because $\mathcal{I}$ is finite by assumption) partition of $X$. If the itinerary $I$ reads $a_0 a_1 \ldots$ then $U(I)$ admits the description $U(I) = \bigcap_{k \geq 0} f^{-k}(V_{a_k})$, which exhibits it as an intersection of closed sets and shows that it is closed. Since the $U(I)$ are finite in number, partition $X$, and are all closed, it follows that they are all open. Thus $\mathcal{U}$ is a partition of $X$ into clopen sets. Notice that $p \in V_i$ if and only if its itinerary with respect to $\mathcal{V}$ begins with an $i$. Thus $V_i$ is the union of $U(I)$ where $I$ ranges over all the itineraries in $\mathcal{I}$ that begin with the symbol $i$ and so $\mathcal{U}$ refines $\mathcal{V}$. Finally, observe that if $p$ follows an itinerary $I$ then $f(p)$ follows the itinerary $\sigma(I)$ so $f(U(I)) \subset U(\sigma(I))$. Thus $\mathcal{U}$ is a dynamical partition.

The proof of the converse is similarly easy.
\end{proof}

Sometimes we will consider partitions of the form $\mathcal{U} = \{X \setminus U, U\}$, where $U$ is a clopen subset of $X$, and speak of the itinerary with respect to $U$ rather than $\mathcal{U}$. Labelling the elements in $\mathcal{U}$ as $U_0 := X \setminus U$ and $U_1 := U$, the itinerary of a point $p \in X$ with respect to $U$ is just a sequence of zeroes and ones that records $f^k(p) \in U$ with a $1$ and $f^k(p) \not\in U$ with a $0$. Equivalently, the itinerary of $p$ is just the sequence $\chi_U(f^k(p)) = \chi_{f^{-k}U}(p)$. This seemingly pedantic expression will be useful later on.

We conclude this subsection with two extension lemmas whose interest will become clear later on. Recall from the Introduction that a closed set $Y \subset X$ is said to be stable with respect to clopen sets if it has a basis of clopen positively invariant neighbourhoods.

\begin{lemma}[Extending a dynamical $\epsilon$--partition I]\label{lem:extend} 
Let $Z \subset X$ be stable with respect to clopen sets and suppose that $\mathcal W = \{W_i\}_{i \in \Lambda}$ is a dynamical $\epsilon$--partition of $Z$ (with the induced topology). Then, there exists a dynamical $\epsilon$--partition $\mathcal V = \{V_i\}_{i \in \Lambda}$ of a clopen, positively invariant neighborhood of $Z$ in $X$ such that $V_i \cap Z = W_i$.
\end{lemma}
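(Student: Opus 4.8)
The plan is to start from the given dynamical $\epsilon$--partition $\mathcal W = \{W_i\}_{i \in \Lambda}$ of $Z$ and inflate each $W_i$ to a clopen set in $X$, then shrink the resulting collection so it becomes a genuine partition of a clopen positively invariant neighborhood of $Z$, all while keeping diameters below $\epsilon$ and the dynamical (carrier) structure intact. First I would use stability with respect to clopen sets to fix a clopen positively invariant neighborhood $N$ of $Z$ in $X$ that is ``small'' in an appropriate sense; the point of this first step is to gain enough room without ruining invariance. Since each $W_i$ is clopen \emph{in} $Z$ and has diameter $< \epsilon$, and since $Z$ is compact, there is $\eta>0$ with $\eta < \epsilon - \max_i \diam(W_i)$ and also $\eta$ smaller than the pairwise distances among the $W_i$; I would cover each $W_i$ by an open set $O_i$ in $X$ of diameter $< \epsilon$ whose closure still misses the other $W_j$, and (using that $X$ has a basis of clopen sets, together with compactness of $W_i$) refine $O_i$ to a clopen set $G_i \subset X$ with $W_i \subset G_i$, $\diam(G_i) < \epsilon$, and the $G_i$ pairwise disjoint. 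Replacing $N$ by $N \cap \bigcup_i G_i$, we get a clopen neighborhood of $Z$ covered by the disjoint clopen sets $G_i$; intersecting, set $\widetilde V_i := G_i \cap N$, so that $\{\widetilde V_i\}$ is a clopen partition of a clopen neighborhood of $Z$ with $\widetilde V_i \cap Z = W_i$ and all diameters $< \epsilon$.

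The next step is to fix the dynamics. Let $\tau \colon \Lambda \to \Lambda$ be the induced map on $\mathcal W$, so $f(W_i) \subset W_{\tau(i)}$. The sets $f(\widetilde V_i)$ need not lie inside a single $\widetilde V_j$, but $f(W_i) \subset W_{\tau(i)} \subset \interior(\widetilde V_{\tau(i)})$, so by continuity and compactness of $W_i$ there is a clopen neighborhood $A_i$ of $W_i$ in $X$ with $f(A_i) \subset \widetilde V_{\tau(i)}$. Intersecting over a finite backward orbit is the key idea: I would pass to a clopen neighborhood of $Z$ on which the carrier map is forced to follow $\tau$ for \emph{one} step and then iterate. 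Concretely, starting from the neighborhood $M_0 := \bigcup_i (\widetilde V_i \cap A_i)$ of $Z$, define $V_i := A_i \cap f^{-1}(V_{\tau(i)}\text{-to-be}) \cap \cdots$ — more cleanly, take $M := \bigcap_{k \ge 0} f^{-k}(M_0)$, which is closed, positively invariant, and contains $Z$; one checks $M$ is in fact clopen because, $\Lambda$ being finite, the itineraries of points of $M_0$ with respect to $\{\widetilde V_i \cap A_i\}$ take finitely many values, so $M$ is a finite intersection of clopen sets (this is exactly the mechanism in Lemma~\ref{lem:refinement}). Then set $V_i := \widetilde V_i \cap M$. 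Each $V_i$ is clopen, $V_i \cap Z = W_i$, $\diam(V_i) < \epsilon$, the $V_i$ partition the clopen positively invariant set $M$, and $f(V_i) \subset \widetilde V_{\tau(i)} \cap M = V_{\tau(i)}$ since $M$ is positively invariant. Hence $\mathcal V = \{V_i\}_{i \in \Lambda}$ is a dynamical $\epsilon$--partition of the clopen positively invariant neighborhood $M$ of $Z$, with induced map again $\tau$.

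I expect the main obstacle to be the bookkeeping in the second step: ensuring that after intersecting with all backward iterates $f^{-k}(M_0)$ the resulting set $M$ is still a \emph{neighborhood} of $Z$ (not just a closed invariant superset), and that it is clopen rather than merely closed. Both points rest on the finiteness of $\Lambda$: positive invariance of $Z$ and $f(W_i)\subset W_{\tau(i)}$ guarantee $Z \subset f^{-k}(M_0)$ for all $k$, and the finitely-many-itineraries argument from Section~\ref{sec:partitions} upgrades the countable intersection to a finite one and hence keeps $M$ clopen. A secondary subtlety is choosing $\eta$ and the clopen enlargements $G_i$ so that disjointness and the diameter bound survive simultaneously; this is routine given that $X$ has a clopen basis and $Z$ is compact, so I would not belabor it. The construction preserves the labelling set $\Lambda$ and the combinatorics $\tau$ exactly, which is what the statement $V_i \cap Z = W_i$ encodes.
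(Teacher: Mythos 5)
Your first step (inflating the $W_i$ to pairwise disjoint clopen sets $G_i$, resp.\ $A_i$, of diameter $<\epsilon$ with $f(A_i)\subset \widetilde V_{\tau(i)}$) is fine and matches the paper's choice of the sets $W'_i$. The gap is in how you then produce the positively invariant neighborhood. You set $M:=\bigcap_{k\ge 0}f^{-k}(M_0)$ and claim it is clopen and a neighborhood of $Z$ because ``$\Lambda$ being finite, the itineraries of points of $M_0$ take finitely many values.'' That inference is false: a finite alphabet does not bound the number of \emph{realized} itineraries (there can be uncountably many), and indeed finiteness of the itinerary set is precisely the strong dynamical condition that Theorem \ref{teo:epsilonparticiones} characterizes --- it is not available here and cannot follow from the mere finiteness of $\Lambda$. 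Without it, $M$ is only a countable intersection of clopen sets, hence closed but not necessarily open; worse, $M$ is the maximal positively invariant subset of $M_0$, and there is no reason for it to contain a neighborhood of $Z$ at all (think of a hyperbolic fixed point of a shift: points arbitrarily close to $Z$ may eventually leave $M_0$). The assertion that such an $M_0$ \emph{does} contain a clopen positively invariant neighborhood of $Z$ is exactly the content of the hypothesis ``$Z$ is stable with respect to clopen sets,'' which you mention at the outset but then effectively discard (your replacement $N\cap\bigcup_i G_i$ destroys the positive invariance of $N$, and stability is never invoked again).

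The repair is short and turns your argument into the paper's: take $W'_i:=\widetilde V_i\cap A_i$ (clopen, $\diam<\epsilon$, pairwise disjoint, and $f(W'_i)\cap W'_j=\emptyset$ unless $j=\tau(i)$), use the stability hypothesis to choose a clopen positively invariant neighborhood $P$ of $Z$ with $P\subset\bigcup_i W'_i$, and set $V_i:=W'_i\cap P$. Then $f(V_i)\subset f(P)\subset P=\bigcup_j V_j$, while $f(V_i)\cap V_j\subset f(W'_i)\cap W'_j=\emptyset$ for $j\ne\tau(i)$, so $f(V_i)\subset V_{\tau(i)}$; the diameter bound and $V_i\cap Z=W_i$ are immediate. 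No backward-iterate intersection is needed.
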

\begin{proof} Denote by $\tau$ the map in $\Lambda$ that satisfies $f(W_i) \subset W_{\tau(i)}$. Since $X$ is totally disconnected, for each $i \in \Lambda$ we can choose a clopen neighbourhood (in $X$) of $W_i$, say $W'_i$, of diameter smaller than $\epsilon$ and such that $f(W'_i) \cap W'_j \neq \emptyset$ only for $j = \tau(i)$.

Since $\cup_i W'_i$ is a neighbourhood of $Z$ in $X$ and $Z$ is stable with respect to clopen sets, there exists a clopen, positively invariant neighbourhood $P$ of $Z$ contained in $\cup_i W'_i$. Define $V_i = W'_i \cap P$. As an intersection of clopen sets, $V_i$ is clopen in $X$, and $\{V_i\}$ constitutes a partition of $P$. Notice also that the choice of $W'_i$ guarantees that $\diam(V_i) < \epsilon$. It only remains to check that $\{V_i\}$ is a dynamical partition. Consider any $i \in \Lambda$. On the one hand, $V_i \subset P$ so $f(V_i) \subset f(P) \subset P = \cup_j V_j$, where we have used that $P$ is positively invariant. On the other, $V_i \subset W'_i$ so $f(V_i) \subset f(W'_i)$ and therefore $f(V_i) \cap V_j \subset f(W'_i) \cap W'_j$, which is empty unless $j = \tau(i)$. Since we have just seen that $f(V_i)$ is contained in the union of the $V_j$'s, it follows that $f(V_i) \subset V_{\tau(i)}$. 
\end{proof}

\begin{lemma}[Extending a dynamical $\epsilon$--partition II]\label{lem:trim}
Assume $P$ is a clopen positively invariant subset of $X$ and $\mathcal V = \{V_i\}$ is a dynamical $\epsilon$--partition of $P$. Then, we can add clopen sets of $X$ to $\mathcal V$ to obtain a dynamical $\epsilon$--partition of $f^{-1}(P)$.
\end{lemma}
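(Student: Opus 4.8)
The plan is to start from the observation that $f^{-1}(P)$ is clopen (being the preimage of a clopen set under a continuous map) and that it contains $P$, since positive invariance means $f(P)\subset P$, i.e. $P\subset f^{-1}(P)$. Hence the only region not yet covered by $\mathcal V$ is the clopen set $A := f^{-1}(P)\setminus P$, and every $x\in A$ satisfies $f(x)\in P$, so $f(x)$ lies in exactly one element $V_i$ of $\mathcal V$.

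Next I would split $A$ according to which element of $\mathcal V$ absorbs the image: put $A_i := A\cap f^{-1}(V_i)$. These sets are clopen, pairwise disjoint, and their union is $A$ because $f(A)\subset P=\bigcup_i V_i$. They need not have small diameter, so I would refine each $A_i$ into finitely many clopen pieces $A_{i,1},\dots,A_{i,k_i}$ of diameter $<\epsilon$; this is possible because $X$ is compact, metric and totally disconnected, so the clopen subsets of $A_i$ of diameter $<\epsilon$ cover $A_i$, a finite subcover exists by compactness, and it can be disjointified to produce such pieces, which remain clopen in $X$ since $A$ is.

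Finally I would verify that $\mathcal W := \mathcal V\cup\{A_{i,j}\}$ is the desired partition. Its elements are clopen and partition $P\cup A=f^{-1}(P)$; all have diameter $<\epsilon$ (the $V_i$ by hypothesis, the $A_{i,j}$ by construction); and $\mathcal W$ is obtained from $\mathcal V$ by adding clopen sets. It is dynamical: for $V_i\in\mathcal V$ we still have $f(V_i)\subset V_{\tau(i)}\in\mathcal W$, and for an added piece $f(A_{i,j})\subset f(A_i)\subset V_i\in\mathcal W$. I do not anticipate any real obstacle; the only places the hypotheses are used are that $P$ is positively invariant (so $P\subset f^{-1}(P)$ and the $V_i$ stay legitimate in the enlarged partition) and clopen (so $A$ and the $A_i$ are clopen), while the refinement step is just the standard fact that clopen sets of diameter $<\epsilon$ form a basis in a compact, metric, totally disconnected space.
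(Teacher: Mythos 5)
Your proof is correct and follows essentially the same route as the paper's: your sets $A_i = A\cap f^{-1}(V_i)$ coincide with the paper's $f^{-1}(V_i)\setminus P$, and both arguments then chop these into clopen pieces of diameter less than $\epsilon$ and check the dynamical condition. You have merely spelled out the verification in more detail.
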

\begin{proof}
Note that $f^{-1}(P) \setminus P$ is clopen in $X$ and is naturally partitioned in the sets $\{f^{-1}(V_i) \setminus P\}$, which are also clopen in $X$. Since $X$ is totally disconnected, each of these can be partitioned into clopen sets of diameter smaller than $\epsilon$. Adding them to $\mathcal V$ we obtain a dynamical $\epsilon$--partition of $f^{-1}(P)$.
\end{proof}

\section{Proof of Proposition \ref{prop:coh}} \label{sec:coh}

In this brief section we prove Proposition \ref{prop:coh}, which states that (if any) the nonzero eigenvalues of $f^{*}\colon \check{H}^{0}(X;\mathbb{C})\to\check{H}^{0}(X;\mathbb{C})$ are all roots of unity.

Recall that $\check{H}^{0}(X)$ can be identified with the $\mathbb{C}$--vector space of all locally constant functions on $X$.
Under this identification, the action of the induced homomorphism $f^{*}\colon \check{H}^{0}(X)\to\check{H}^{0}(X)$ is simply $\varphi\mapsto\varphi\circ f$. (See the Appendix for more details).

Now suppose that $\lambda\in\mathbb{C}$ is a nonzero eigenvalue of $f^{*}\colon\check{H}^{0}(X)\to\check{H}^{0}(X)$.
Then there exists a nonzero, locally constant map $\varphi:X\to\mathbb{C}$ such that $f^{*}(\varphi)=\lambda\varphi$; that is, $\varphi\circ f=\lambda\varphi$.
Using the fact that $\varphi$ is locally constant and $X$ is compact it is easy to see that $\varphi$ takes only finitely different values $c_{0},c_{1},\ldots,c_{n}$, where at least one of them is nonzero because $\varphi$ is nonzero, and the collection $\mathcal{U}:=\left\{U_{i}:=\varphi^{-1}(c_{i})\right\}$ constitutes a partition of $X$ into clopen sets.
From $\varphi\circ f=\lambda\varphi$, for any $i$ we have that $\varphi f(U_{i})=\lambda\varphi(U_{i})=\lambda c_{i}$, so $\varphi$ is constant over $f(U_{i})$ and there exists $j$ with $c_{j} = \lambda c_i$ and $f(U_{i})\subset U_{j}$. Therefore, in the terminology introduced earlier, $\mathcal{U}$ is a dynamical partition of $X$.
Recall from Section \ref{sec:partitions} that this automatically yields that there are only finitely many itineraries with respect to $\mathcal U$ and in particular there exist $k$ and $s$ such that the itinerary of any point is $s$--periodic.
Take $p$ such that $\varphi(p) \neq 0$. The previous statement and the definition of $\mathcal U$ imply that $\varphi(f^{k+s}(p)) = \varphi(f^k(p))$, so that $\lambda^{k+s} \varphi(p) = \lambda^k \varphi(p)$ and consequently $\lambda^s = 1$.

\section{Eigenvalues and $\epsilon$-partitions} \label{sec:eigen}

In this section we prove Theorem \ref{thm:intro}. It will be convenient to prove the theorem in this slightly different form:

\begin{theorem}\label{teo:epsilonparticiones}

Let $X$ be compact and totally disconnected and let $f \colon X \to X$ be continuous. Consider the induced map $f_* : \check{H}_0(X;\mathbb{C}) \to \check{H}_0(X;\mathbb{C})$. Then the following statements are equivalent:
\begin{itemize}
	\item[({\it i}\/)] There exists $\lambda \in \mathbb{C}$ with $|\lambda| \neq 0, 1$ that is not an eigenvalue of $f_*$.
	\item[({\it ii}\/)] No $\lambda \in \mathbb{C}$ with $|\lambda| \neq 0, 1$ is an eigenvalue of $f_*$.
	\item[({\it iii}\/)] $(X, f)$ admits dynamical $\epsilon$--partitions for every $\epsilon > 0$.
	\item[({\it iv}\/)] The number of different itineraries with respect to every clopen partition of $X$ is finite.
	\item[({\it v}\/)] The number of different itineraries with respect to every clopen subset of $X$ is finite.
\end{itemize}
\end{theorem}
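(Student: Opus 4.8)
The plan is to establish the cycle of implications
$(iii) \Rightarrow (iv) \Rightarrow (v) \Rightarrow (ii) \Rightarrow (i) \Rightarrow (iii)$,
with the analytic heart of the argument concentrated in $(v) \Rightarrow (ii)$ and in $(i) \Rightarrow (iii)$. The implications $(ii) \Rightarrow (i)$ and $(iv) \Rightarrow (v)$ are trivial (a clopen set $U$ gives the two-element partition $\{X\setminus U, U\}$, as explained after Lemma~\ref{lem:trim}). For $(iii) \Rightarrow (iv)$: given a clopen partition $\mathcal{V}$ and a dynamical partition $\mathcal{U}$ of $X$ whose mesh is smaller than a Lebesgue number for $\mathcal{V}$, the partition $\mathcal{U}$ refines $\mathcal{V}$; since $\mathcal{U}$ is dynamical it has only finitely many itineraries (by the bulleted remarks in Section~\ref{sec:partitions}), and a refinement has at least as many itineraries as the coarser partition, so $\mathcal{V}$ has finitely many itineraries too.

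\textbf{The implication $(v) \Rightarrow (ii)$.} Suppose some $\lambda$ with $|\lambda|\neq 0,1$ is an eigenvalue of $f_*$, so there is a nonzero $\alpha \in \check{H}_0(X;\mathbb{C}) = \Hom(\check{H}^0(X;\mathbb{C}),\mathbb{C})$ with $f_*\alpha = \lambda\alpha$. Dualizing, for every locally constant $\varphi$ we have $\alpha(\varphi\circ f) = \lambda\,\alpha(\varphi)$. Pick $\varphi$ with $\alpha(\varphi)\neq 0$; replacing $\varphi$ by a difference of two of its level-set indicator functions and rescaling, I can arrange $\varphi = \chi_U$ for a clopen $U \subset X$ with $\alpha(\chi_U)\neq 0$. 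Now iterate: $\alpha(\chi_{f^{-n}U}) = \alpha(\chi_U\circ f^n) = \lambda^n\alpha(\chi_U)$. If the itinerary set with respect to $U$ were finite, then (as recorded in Section~\ref{sec:partitions}) the sequence of sets $f^{-n}U$ would be eventually periodic up to the partition into itinerary-cylinders; more precisely there would exist $k,s$ with $f^{-(k+s)}U$ and $f^{-k}U$ determining the same cylinders, forcing $\chi_{f^{-(k+s)}U} = \chi_{f^{-k}U}$ on the support of $\alpha$ — concretely $\alpha(\chi_{f^{-(k+s)}U}) = \alpha(\chi_{f^{-k}U})$, hence $\lambda^{k+s}\alpha(\chi_U)=\lambda^k\alpha(\chi_U)$ and $\lambda^s=1$, contradicting $|\lambda|\neq 0,1$. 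So the itinerary set with respect to $U$ is infinite, contradicting $(v)$. (The one subtlety to check carefully is that $\alpha$, being a functional on locally constant functions, is determined by its values on finitely many cylinder indicators once a partition is fixed, so the eventual periodicity of the \emph{finite} itinerary set does give the needed equality of $\alpha$-values; this is where I expect to spend the most care.)

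\textbf{The implication $(i) \Rightarrow (iii)$, and why it is the main obstacle.} This is the substantive converse: from the mere existence of one non-eigenvalue $\lambda$, $|\lambda|\neq 0,1$, I must produce dynamical $\epsilon$--partitions for all $\epsilon$. The natural route is contrapositive: if for some $\epsilon_0$ there is no dynamical $\epsilon_0$--partition, build, for any prescribed $\lambda$ with $|\lambda|\neq 0,1$, an eigenvector of $f_*$. By Lemma~\ref{lem:refinement}, the failure to have a dynamical $\epsilon_0$--partition means that some (hence, refining, arbitrarily fine) clopen partition $\mathcal{V}$ has \emph{infinitely many} itineraries; equivalently, choosing $\mathcal V$ fine enough and isolating a single clopen $U$, the itineraries with respect to $U$ form an infinite set. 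The strategy is then to use this infinite itinerary set to manufacture, in the inverse limit defining $\check{H}_0$, a vector on which $f_*$ acts with eigenvalue $\lambda$: one works in the tower of finite partitions $\mathcal V \prec \mathcal V \vee f^{-1}\mathcal V \prec \cdots$, where $\check H_0$ appears as the inverse limit of the (finite-dimensional, dual) spaces $\check H_0(\mathcal V_n)$ with connecting maps induced by refinement, and where $f$ acts compatibly by the shift on itineraries. Because the itinerary sets are infinite, the dimensions $\dim \check H_0(\mathcal V_n)$ grow without bound, and one can solve $f_* \alpha_n = \lambda \alpha_n$ at each finite stage compatibly — essentially because the shift on the (infinite, hence non-eventually-periodic) itinerary space has $\lambda$ in its spectrum for every $\lambda$ with $|\lambda|\neq 0,1$, the classical phenomenon for one-sided shifts on sequence spaces. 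Making this compatible choice across the tower, so that the $\alpha_n$ assemble into a genuine nonzero element of the inverse limit, is the delicate point: it requires a Mittag-Leffler / finite-dimensional compactness argument to avoid the inverse limit of the nonzero eigenspaces collapsing to zero. I expect this assembly step — passing from finite-stage eigenvectors to an honest $\check H_0$ eigenvector, controlling surjectivity of the connecting maps on eigenspaces — to be the crux of the whole theorem.
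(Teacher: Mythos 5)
The implications $(ii)\Rightarrow(i)$, $(iii)\Rightarrow(iv)\Rightarrow(v)$ and your argument for $(v)\Rightarrow(ii)$ are correct and essentially coincide with the paper's: the finiteness of itineraries with respect to $U$ gives $k,s$ with $f^{-k}(U)=f^{-(k+s)}(U)$ \emph{as sets} (since every point's itinerary becomes $s$--periodic after $k$ steps, $f^k(p)\in U$ iff $f^{k+s}(p)\in U$), so you do not even need the cautious ``on the support of $\alpha$'' hedge; the equality $\lambda^{k+s}\alpha(\chi_U)=\lambda^k\alpha(\chi_U)$ follows outright. The one small point worth tightening there is the selection of $U$: rather than manipulating level sets of an arbitrary $\varphi$, simply use that the characteristic functions of clopen sets span $\check H^0(X;\mathbb{C})$, so $\alpha\neq 0$ forces $\alpha(\chi_U)\neq 0$ for some clopen $U$.

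The genuine gap is $(i)\Rightarrow(iii)$, which is where all the content of the theorem lives, and your sketch does not survive scrutiny. First, the finite-stage eigenvalue problem you propose does not typecheck: in the tower $\mathcal V\prec\mathcal V\vee f^{-1}\mathcal V\prec\cdots$ the map induced by $f$ goes from the $(n+1)$st stage to the $n$th stage (the shift truncates a length-$(n{+}1)$ itinerary to a length-$n$ one), so there is no endomorphism of $\check H_0(\mathcal V_n)$ whose eigenvectors you could take, and the phrase ``solve $f_*\alpha_n=\lambda\alpha_n$ at each finite stage'' has no meaning without first linearizing the whole ladder. Second, the appeal to ``$\lambda$ is in the spectrum of the one-sided shift'' conflates spectrum with point spectrum and does not specify the function space; producing an actual eigenvector of $f_*$ on $\mathrm{Hom}(\check H^0(X),\mathbb{C})$ from an infinite itinerary set is exactly the step that needs a proof, and the Mittag--Leffler remark does not supply one (the stable images of the eigenspaces along the tower could perfectly well vanish). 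The paper avoids all of this by working dually and quantitatively: $\lambda$ is not an eigenvalue of $f_*$ iff $f^*-\lambda\,\mathrm{id}$ is \emph{surjective} on locally constant functions, so one can solve $\psi\circ f-\lambda\psi=\chi_V$, expand $\psi(p)$ as a power series whose coefficients are the itinerary of $p$ with respect to $V$, and invoke an arithmetic uniqueness lemma (Lemma~\ref{lem:unique}) for $\{0,1\}$--coefficient series with $r$--gaps to conclude that only finitely many itineraries occur. Crucially this only works for clopen $V$ disjoint from $fV,\dots,f^rV$, which is impossible near points of $\mathrm{Per}_r(f)$; the paper therefore needs a separate construction of dynamical $\epsilon$--partitions on $\mathrm{Per}_r(f)$, its stability with respect to clopen sets, the two extension lemmas to glue the pieces, and a reduction to the surjective case via $Y=\bigcap_n f^n(X)$ using continuity of \v{C}ech cohomology. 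None of these ingredients — the duality/surjectivity mechanism, the gap condition and the periodic-point obstruction, the gluing — appears in your outline, so the implication remains unproved.
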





The part of Theorem \ref{teo:epsilonparticiones} that requires more effort is the proof of $(i)$ $\Rightarrow$ $(iii)$. We will therefore address it first. Since the argument is slightly intricate we give a brief outline here. The main technical difficulty lies in the set $\Per_r(f)$, which is defined as the set of $f$--periodic points with period bounded above by certain natural number $r$. It will be easy to produce the desired partition \emph{away} from this set and also \emph{on} this set, but matching them will require some work. First we shall consider the case when $f$ is surjective. An arithmetical argument will produce a natural number $r(\lambda)$ that only depends on $\lambda$ and, for any $r \geq r(\lambda)$, we shall show that:

\begin{itemize}
	\item[(1)] Restricting our attention to the dynamical system $(\Per_r(f), f|_{\Per_r(f)})$, the latter has a dynamical $\epsilon$--partition.
	\item[(2)] This can be extended to a dynamical $\epsilon$--partition of a clopen, positively invariant neighbourhood $P$ of $\Per_r(f)$.
	\item[(3)] Letting $A$ be the set of points in $X$ whose forward orbit eventually enters $P$ (and remains there thereafter, since $P$ is positively invariant), the partition in (ii) can be extended to a dynamical $\epsilon$--partition of $A$.
	\item[(4)] There is a dynamical partition of $X \setminus P$ that is an ``$\epsilon$--partition modulo $P$'': every element of the partition either has diameter less than $\epsilon$ or, if not, is contained in $P$.
	\item[(5)] Taking the common refinement of the partitions in (3) and (4) yields a dynamical $\epsilon$--partition of all of $X$.
\end{itemize}

The proof when $f$ is not surjective will build on the surjective case. We will consider the smallest invariant set in which $f$ is surjective, which is $Y =  \cap_{n \ge 0} f^n(X)$, and prove that:
\begin{itemize}
	\item[(6)] The hypotheses of the theorem still hold for the restriction $f|_Y : Y \to Y$ and so (since $f|_Y$ is surjective) there is a dynamical $\epsilon$--partition of $Y$.
	\item[(7)] The partition in (6) can be extended to a clopen, positively invariant neighbourhood of $Y$ in $X$.
	\item[(8)] The partition in (7) can be extended to a dynamical $\epsilon$--partition of the whole $X$.
\end{itemize}

As the reader can see, extending dynamical partitions is a key step in the proofs. This is where Lemmas \ref{lem:extend} and \ref{lem:trim} from Section \ref{sec:partitions} will become useful. The first extends dynamical $\epsilon$--partitions from a closed set (which needs to satisfy some additional hypothesis) to a clopen, positively invariant neighbourhood; this is used in going from (1) to (2) and also from (6) to (7). The second extends dynamical $\epsilon$--partitions of a clopen positively invariant set to its preimage under $f$; we use it to go from (2) to (3) and also from (7) to (8).

\subsection{Proof of ({\it i}\/) $\Rightarrow$ ({\it iii}\/) for surjective $f$}

First, we need a technical lemma. This does not involve dynamics or topology. For any $r = 0,1,2,\ldots$ denote by $S_r$ the set of sequences $(a_k)$ of zeroes and ones such that whenever a term of the sequence is one, the following $r$ terms (at least) are zero. That is, if $a_k = 1$ then $a_{k+1} = \ldots = a_{k+r} = 0$. $S_r$ is a subshift of finite type of the full one--sided shift $\{0, 1\}^{\mathbb N}$.

\begin{lemma} \label{lem:unique} Let $\lambda \in \mathbb{C}$ have modulus $|\lambda| > 1$. For big enough $r$, whenever two sequences $(a_k)$ and $(b_k)$ belonging to $S_r$ satisfy \[\sum_{k \geq 0} a_k \lambda^{-k} = \sum_{k \geq 0} b_k \lambda^{-k}\] then the sequences themselves must be equal.
\end{lemma}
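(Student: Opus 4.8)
The plan is to reduce the statement to a gap estimate for tails of such power series. Suppose $(a_k)$ and $(b_k)$ are distinct elements of $S_r$ with $\sum_{k\geq 0} a_k\lambda^{-k} = \sum_{k\geq 0} b_k\lambda^{-k}$. Let $n$ be the first index where they differ; without loss of generality $a_n = 1$ and $b_n = 0$. Then the equality of the two series gives
\[
\lambda^{-n} + \sum_{k > n} a_k\lambda^{-k} = \sum_{k > n} b_k\lambda^{-k},
\]
so that, after multiplying by $\lambda^{n}$,
\[
1 = \sum_{k > n}(b_k - a_k)\lambda^{-(k-n)} = \sum_{j \geq 1}(b_{n+j} - a_{n+j})\lambda^{-j}.
\]
Now I would use the $S_r$ constraint: since $a_n = 1$, we have $a_{n+1} = \cdots = a_{n+r} = 0$, hence $b_{n+j} - a_{n+j} = b_{n+j}$ for $1 \leq j \leq r$, and each such term is $0$ or $1$. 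For $j > r$ the coefficients $b_{n+j} - a_{n+j}$ lie in $\{-1,0,1\}$. Therefore
\[
1 = \left|\sum_{j\geq 1}(b_{n+j}-a_{n+j})\lambda^{-j}\right| \leq \sum_{j\geq 1}|b_{n+j}-a_{n+j}|\,|\lambda|^{-j}.
\]

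The key point is that the first $r$ terms of this bound can be controlled better than the crude estimate $\sum_{j\geq 1}|\lambda|^{-j} = \tfrac{1}{|\lambda|-1}$, which need not be less than $1$. Among indices $1,\dots,r$, the $b_{n+j}$ themselves form (a block of) a sequence in $S_r$, so the ones among them are separated by gaps of at least $r$; thus at most a bounded number (roughly $r/r = 1$, or in any case $O(1)$) of them can be $1$, and in fact within a window of length $r$ there is at most one $1$. Hence $\sum_{j=1}^{r}|b_{n+j}-a_{n+j}|\,|\lambda|^{-j} \leq |\lambda|^{-1}$ (at most one nonzero term, and its weight is at most $|\lambda|^{-1}$), while the tail satisfies $\sum_{j > r}|\lambda|^{-j} = \tfrac{|\lambda|^{-r}}{|\lambda|-1}$. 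Combining,
\[
1 \leq \frac{1}{|\lambda|} + \frac{|\lambda|^{-r}}{|\lambda|-1}.
\]
Since $|\lambda| > 1$ we have $\tfrac{1}{|\lambda|} < 1$, so the right-hand side tends to $\tfrac{1}{|\lambda|} < 1$ as $r \to \infty$; choosing $r$ large enough that $\tfrac{|\lambda|^{-r}}{|\lambda|-1} < 1 - \tfrac{1}{|\lambda|}$ yields $1 < 1$, a contradiction. Hence for such $r$ no two distinct sequences in $S_r$ can have equal sums, which is the claim.

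I expect the main obstacle to be pinning down the sharp bound on $\sum_{j=1}^{r}|b_{n+j}-a_{n+j}|\,|\lambda|^{-j}$: one must argue carefully that, because $b_n$ and $a_n$ both sit in an $S_r$-sequence and $a_n=1$ forces a long run of zeros in $(a_k)$, the only surviving contributions in the window $1 \leq j \leq r$ come from the $b$'s, which are themselves $r$-separated, so at most one of them is nonzero in that window. A secondary, purely bookkeeping concern is that $n$ could be $0$, but the argument above is insensitive to this. Once the window estimate is in hand, the choice of $r$ depending only on $|\lambda|$ is immediate from the displayed inequality, so the dependence ``$r = r(\lambda)$'' promised in the outline of the proof of Theorem \ref{teo:epsilonparticiones} is automatic.
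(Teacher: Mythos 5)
Your argument is correct and follows essentially the same route as the paper's: normalize to the first index of disagreement, apply the triangle inequality, and exploit the $r$--separation of ones in $S_r$ to force the resulting bound below $1$ for large $r$. The only difference is bookkeeping --- the paper splits the difference sequence into blocks of length $r$ containing at most two ones each, whereas you isolate the first window of length $r$ (where the $a$--terms vanish outright, leaving at most one contribution of weight $\leq |\lambda|^{-1}$) and bound the tail by a full geometric series; both estimates tend to $1/|\lambda| < 1$ as $r \to \infty$.
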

\begin{proof} Suppose that the sequences $(a_k)$ and $(b_k)$ differ. By cancelling the (potentially empty) initial block of terms where the sequences coincide we may assume without loss of generality that $a_0 \neq b_0$ so that the difference $a_0 - b_0 = \pm 1$. Taking modulus and rearranging terms we have \begin{equation} \label{eq:sum} 1 \le \sum_{k \geq 1} \frac{|a_k-b_k|}{|\lambda|^k} = \sum_{k \geq 1} \frac{c_k}{|\lambda|^k}.\end{equation} where we have defined $c_k := |a_k - b_k|$. Notice that $(c_k)$ is a sequence of zeroes and ones with $c_0 = 1$. The condition that $(a_k)$ and $(b_k)$ belong to $S_r$ implies that the following property holds: any block $B$ of $r$ consecutive terms of $(c_k)$ contains, at most, two nonzero entries (which are, therefore, ones).

Now think of the sequence $(c_k)$ grouped in blocks of $r$ terms thus: \[c_0 c_1 \dots c_{r-1} \quad , \quad c_r c_{r+1} \ldots c_{2r-1} \quad , \quad c_{2r} c_{2r+1} \ldots c_{3r-1} \quad , \quad \ldots\]

As just mentioned each of these blocks contains at most two ones; the remaining terms being zero. Moreover, the first block begins with $c_0 = 1$ so among $c_1 \ldots c_{r-1}$ there is at most one nonzero term. The contribution of $c_1 \ldots c_{r-1}$ to the series in Equation \eqref{eq:sum} is therefore bounded above by $\nicefrac{1}{|\lambda|}$. The contribution of the remaining blocks, each of which contains at most two ones, attains its maximum value when the two ones appear in the first two positions of the block; thus, their contributions are bounded above by $\nicefrac{1}{|\lambda|^r} + \nicefrac{1}{|\lambda|^{r+1}}$, by $\nicefrac{1}{|\lambda|^{2r}} + \nicefrac{1}{|\lambda|^{2r+1}}$, and so on. Putting all this together, the series in Equation \eqref{eq:sum} can be bounded above by \[\left( \frac{1}{|\lambda|} \right) + \left( \frac{1}{|\lambda|^r} + \frac{1}{|\lambda|^{r+1}} \right) + \left( \frac{1}{|\lambda|^{2r}} + \frac{1}{|\lambda|^{2r+1}} \right) + \ldots = \left(\frac{1}{|\lambda|} + \frac{1}{|\lambda|^r} \right) \frac{1}{1-\frac{1}{|\lambda|^r}}.\] To arrive at a contradiction we need to choose $r$ so that the right hand side of the above is $< 1$, because then Equation \eqref{eq:sum} is violated. Imposing this condition and rearranging terms $r$ must be chosen to satisfy \[\frac{1}{|\lambda|} + \frac{1}{|\lambda|^r} + \frac{1}{|\lambda|^r} < 1,\] which will certainly hold for big enough $r$ since $|\lambda| > 1$.
\end{proof}

{\it Notation.} From now on we will write $r(\lambda)$ to denote any number big enough so that Lemma \ref{lem:unique} is satisfied.
\medskip

\emph{To simplify subsequent writing, for the results in this subsection the notation and assumptions are as in Theorem \ref{teo:epsilonparticiones}, together with the hypothesis that $f$ is surjective.}

\begin{proposition} \label{prop:finite} Let $r \geq r(\lambda)$ and let $V$ be a clopen set having the following property: $V$ is disjoint from $fV, f^2V, \ldots, f^rV$. Then there are only finitely many different itineraries with respect to $\mathcal{V} := \{X \setminus V, V\}$.
\end{proposition}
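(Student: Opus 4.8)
The plan is to show that the itinerary of a point $p$ with respect to $V$ is uniquely determined by the location of a single point of the forward orbit of $p$ in $\check{H}_0(X;\mathbb{C})$, so that the finiteness of the set of itineraries follows from the fact that there are only finitely many ``candidate targets'' in a suitable finite-dimensional subspace. Concretely, I would first observe that the hypothesis that $V$ is disjoint from $fV,\dots,f^rV$ translates into a constraint on itineraries: if $f^k(p)\in V$ then $f^{k+1}(p),\dots,f^{k+r}(p)\notin V$, because $f^{k+j}(p)\in f^j(V)$ which is disjoint from $V$. Hence the itinerary of every point with respect to $V$, viewed as a $\{0,1\}$--sequence via the indicator of $V$, lies in the subshift $S_r$ from Lemma \ref{lem:unique}.

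Next I would bring in the homology. Working in $\check{H}_0(X;\mathbb{C})$, let $\xi\in\check{H}_0(X;\mathbb{C})$ be the class dual to the locally constant function $\chi_V$ (more precisely, evaluation against $\chi_V$ is a functional on $\check{H}^0$, hence an element of $\check{H}_0$). For a point $p$, the value $\chi_V(f^k(p))=\chi_{f^{-k}V}(p)=a_k$ records the $k$-th symbol of the itinerary $I=a_0a_1\dots$ of $p$ with respect to $V$. Since $\lambda$ is assumed \emph{not} to be an eigenvalue of $f_*$ (this is the standing hypothesis $(i)$ of Theorem \ref{teo:epsilonparticiones}, which — after the arithmetic reduction — we may take to be the specific $\lambda$ with $|\lambda|>1$ underlying $r(\lambda)$), the operator $f_* - \lambda\,\id$ is invertible on the finite-dimensional space spanned by $\xi, f_*\xi, f_*^2\xi,\dots$; more to the point, $\sum_{k\ge 0}\lambda^{-k} f_*^k\xi$ converges and equals $(\id - \lambda^{-1}f_*)^{-1}\xi$, a fixed element $\eta$ of $\check{H}_0(X;\mathbb{C})$ independent of $p$. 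Pairing with the functional associated to a point $p$ (i.e. evaluating cohomology classes at $p$), and using $\langle f_*^k\xi, p\rangle = \chi_V(f^k(p)) = a_k$, one gets $\langle \eta, p\rangle = \sum_{k\ge 0} a_k\lambda^{-k}$.

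Finally I would close the argument by combining the two ingredients. The functional ``evaluation at $p$'' takes values, on the finitely many cohomology classes we care about, in a fixed finite-dimensional $\mathbb{C}$-subspace; but the key point is cleaner: two points $p,q$ with the same value $\sum_k a_k\lambda^{-k}=\sum_k b_k\lambda^{-k}$ must, by Lemma \ref{lem:unique} (applicable since both itineraries lie in $S_r$ and $r\ge r(\lambda)$), have \emph{identical} itineraries $(a_k)=(b_k)$. Therefore the map sending an itinerary $I$ to the complex number $\sum_k a_k\lambda^{-k}$ is injective on the set of realized itineraries. So it remains to see that this set of complex values is finite — and here the honest route, consistent with the paper's style, is to note that $\langle\eta,-\rangle\colon X\to\mathbb{C}$ is the locally constant function representing $\eta$ in $\check H^0$ (equivalently, $\eta=(\id-\lambda^{-1}f^*)^{-1}\chi_V$ read in cohomology, which is locally constant hence takes finitely many values on the compact space $X$). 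Thus the realized itineraries inject into a finite set of values, proving there are only finitely many of them.

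The step I expect to be the main obstacle is the careful bookkeeping in the homology/cohomology pairing: making precise that evaluation of a point against $f_*^k\xi$ returns the symbol $a_k$, that the geometric series $\sum_k\lambda^{-k}f^{*k}\chi_V$ genuinely converges in the (finite-dimensional, since $\chi_V$ generates a finite-dimensional $f^*$-invariant subspace of $\check H^0$) cohomology and represents a locally constant function, and that non-eigenvalue-ness of $\lambda$ for $f_*$ is exactly what is needed to invert $\id-\lambda^{-1}f^*$ on that subspace. Everything else — the $S_r$ membership and the appeal to Lemma \ref{lem:unique} — is immediate from the hypothesis on $V$.
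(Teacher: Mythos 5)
Your overall skeleton matches the paper's: put the itineraries inside the subshift $S_r$, observe that $I \mapsto \sum_k a_k\lambda^{-k}$ is injective on $S_r$ by Lemma \ref{lem:unique}, and then show the set of values of this sum is finite. But the step where you produce the finiteness of values has a genuine gap. You assert that $\chi_V$ generates a finite-dimensional $f^*$-invariant subspace of $\check H^0(X)$, on which $\id-\lambda^{-1}f^*$ can be inverted and the geometric series $\sum_k \lambda^{-k}f^{*k}\chi_V$ converges. There is no reason for that subspace to be finite-dimensional: it is spanned by the functions $\chi_{f^{-k}V}$, $k\ge 0$, and its finite-dimensionality is essentially equivalent to the conclusion you are trying to prove (if there were infinitely many distinct itineraries one would expect infinitely many independent $\chi_{f^{-k}V}$'s). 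Without finite-dimensionality the ``convergence'' of the series in $\check H^0$ is meaningless, since that space carries no complete topology here. The paper sidesteps this entirely: since $\lambda$ is not an eigenvalue of $f_*$, the dual map $f^*-\lambda\,\id$ is \emph{surjective} (this is the Appendix computation), so one can simply \emph{choose} a locally constant $\psi$ with $\psi\circ f-\lambda\psi=\chi_V$; the numerical identity $\psi(p)=-\sum_{k=0}^{n-1}\lambda^{-(k+1)}\chi_V(f^kp)+\lambda^{-n}\psi(f^np)$ then telescopes, and boundedness of $\psi$ (finitely many values on a compact space) gives both the convergence of the scalar series and the finiteness of its possible sums. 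That surjectivity argument is the missing idea; your $(\id-\lambda^{-1}f^*)^{-1}\chi_V$ is exactly this $\psi$, but you cannot manufacture it by a Neumann series.

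A second gap: you reduce to $|\lambda|>1$ by fiat, but hypothesis (\textit{i}) only supplies \emph{some} $\lambda$ with $|\lambda|\neq 0,1$ that is not an eigenvalue, and it may well have $|\lambda|<1$. In that case the forward expansion diverges and one must instead expand $\psi$ along \emph{backward} semiorbits (this is where the standing surjectivity of $f$ is used), obtaining finiteness of backward itineraries; converting that into finiteness of forward itineraries requires the additional pigeonhole/periodicity argument in the paper (two shifted backward itineraries of the same orbit must coincide, forcing all itineraries to be periodic of bounded period). This whole branch is absent from your proposal.
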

\begin{proof} By assumption there exists $\lambda \in \mathbb{C}$ with $|\lambda| \neq 0,1$ that is not an eigenvalue of $f_*$. As explained in the Appendix, this implies that $f^* - \lambda {\rm Id}$ is surjective, where $f^*$ is the homomorphism induced by $f$ in $\check{H}^0(X)$. In particular, there exists a locally constant function $\psi \in \check{H}^0(X)$ such that $(f^* - \lambda \id)(\psi) = \chi_V$, where $\chi_V$ is the characteristic function of $V$. Unravelling the notation, this means that $\psi f - \lambda \psi = \chi_V$.

By the surjectivity of $f$, for any point $p \in X$ there exists a full orbit $\{p_n\}_{n = -\infty}^{+\infty}$ through $p$ (that means $p_0 = p$ and $f(p_n) = p_{n+1}$ for every $n$). The eigenvalue equation yields two possible expansions for $\psi(p)$: one in terms of the backward semiorbit (first equation) and another in terms of the forward semiorbit (second equation):
\begin{align*}
\psi(p)&= \chi_V(p_{-1}) + \lambda \psi(p_{-1}) = \ldots = \sum_{k = 0}^{n-1} \lambda^k \chi_V(p_{-(k+1)}) + \lambda^n \psi(p_{-n}) \\
\psi(p)&= - \frac{1}{\lambda} \chi_V(p_0) + \frac{1}{\lambda} \psi(p_1) = \ldots = -\sum_{k = 0}^{n-1} \frac{1}{\lambda^{k+1}} \chi_V(p_{k}) + \frac{1}{\lambda^n} \psi(p_n)
\end{align*}

Now, because $\psi$ is locally constant and $X$ is compact, $\psi$ takes only finitely many values and in particular it is bounded. Thus we may take $n \to +\infty$ in the equalities above and conclude that $\psi(p)$ can be expressed as
\begin{equation}\label{eq:1}
\frac{1}{\lambda}\sum_{k = 1}^{+\infty} \lambda^k \chi_V(p_{-k})
\enskip \text{ or } \enskip
 -\frac{1}{\lambda}\sum_{k = 0}^{+\infty} \lambda^{-k} \chi_V(p_{k})
\end{equation}
depending on whether $|\lambda| < 1$ or $|\lambda| > 1$, respectively. It follows that the power series (actually only one of them is well-defined depending on $\lambda$) only takes a finite number of values as we evaluate all possible semiorbits of $f$. 

The condition on $V$ in the statement ensures that the sequence of coefficients of both power series in consideration in (\ref{eq:1}) belongs to $S_r$. Thus, Lemma \ref{lem:unique} can be applied to conclude that:
\begin{itemize}
\item[(a)] If $|\lambda| > 1$ there are only finitely many different sequences of the form $(\chi_V(p_0), \chi_V(p_1), \ldots)$ as $\{p_n\}_{n \ge 0}$ ranges over all forward semiorbits in $X$.
\item[(b)] If $|\lambda| < 1$ there are only finitely many different sequences of the form $(\chi_V(p_{-1}), \chi_V(p_{-2}), \ldots)$ as $\{p_n\}_{n \le -1}$ ranges over all backward semiorbits in $X$.
\end{itemize}

In the first case ($|\lambda| > 1$) we directly obtain the existence of a finite number of different itineraries with respect to $\mathcal V$. To address the case $|\lambda| < 1$ note that, for a given backward semiorbit $\{p_n\}_{n \le -1}$, the sequence $\{p_n\}_{n \le -k}$ is also a backward semiorbit (for the point $p_{-k+1}$) for every $k \geq 1$. By (b), there must exist two of these backward semiorbits for which the sequences $(\chi_V(p_n))$ coincide. Thus if we denote by $q$ the (finite) number of values attained by $\psi$, there exist positive integers $1 \leq s < s' \leq q+1$ such that 
\[
(\chi_V(p_{-s}), \chi_V(p_{-(s+1)}), \ldots) = (\chi_V(p_{-s'}), \chi_V(p_{-(s'+1)}), \ldots).
\]
Consequently, both sequences and also $(\chi_V(p_{-1}), \chi_V(p_{-2}), \ldots)$ are periodic of period $s' - s$ and, in particular, periodic of period at most $q$. The same statement then carries over to forward semiorbits as well. Then, all itineraries of points $p$ with respect to $\mathcal V$ are periodic of period at most $q$ so there are only finitely many of them.
\end{proof}

In view of the previous lemma, the set $\Per_r(f)$ that consists of all the periodic points of $X$ whose period is bounded by $r$ plays an importante role: any $p \notin \Per_r(f)$ has a neighborhood to which we can apply Proposition \ref{prop:finite}.

\begin{proposition} \label{prop:e-part_1} Let $r \geq r(\lambda)$ and let $O$ be a clopen neighborhood of $\Per_r(f)$. Then, for any $\epsilon >0$ there exists a dynamical partition $\mathcal{U}$ of $X$ whose elements of diameter greater than $\epsilon$ are contained in $O$. Moreover:
\begin{itemize}
	\item[(i)] $O$ is a union of elements of $\mathcal{U}$.
	\item[(ii)] There exists an element $U_* \in \mathcal{U}$ which is positively invariant and satisfies $\Per_r(f) \subset U_* \subset O$.
\end{itemize}
\end{proposition}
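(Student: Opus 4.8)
The plan is to obtain $\mathcal{U}$ as the partition of $X$ into itineraries with respect to a carefully chosen partition $\mathcal{V}$, invoking Lemma \ref{lem:refinement}; the whole game is to arrange that $O$ is itself a member of $\mathcal{V}$ while every other member is small. To build $\mathcal{V}$, I would start from the observation made right after Proposition \ref{prop:finite}: since $\Per_r(f) \subset O$, every $p \in X \setminus O$ satisfies $f^j(p) \neq p$ for $j = 1, \ldots, r$. For each such $p$, using continuity of the maps $f^j$ and the fact that $X$ has a clopen basis, I would produce a clopen neighbourhood $V_p \ni p$ with $V_p \subset X \setminus O$, $\diam(V_p) < \epsilon$, and $V_p \cap f^j(V_p) = \emptyset$ for $j = 1, \ldots, r$ (take a clopen $V_j \ni p$ with $f^j(V_j) \cap V_j = \emptyset$ for each $j$, then intersect the $V_j$ with one another, with $X \setminus O$, and with a clopen set of diameter $< \epsilon$). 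Proposition \ref{prop:finite} then applies to $V_p$, so there are only finitely many itineraries with respect to $\{X \setminus V_p, V_p\}$. Compactness of $X \setminus O$ lets me extract a finite subcover $V_1, \ldots, V_m$, and since each $V_i \subset X \setminus O$ we in fact get $\bigcup_i V_i = X \setminus O$. I then set $\mathcal{V} := \bigvee_{i=1}^m \{X \setminus V_i, V_i\}$. By the stability of the finite-itineraries condition under common refinements (Section \ref{sec:partitions}), $\mathcal{V}$ has only finitely many itineraries; its member obtained by intersecting all the sets $X \setminus V_i$ is precisely $X \setminus \bigcup_i V_i = O$, and every other member sits inside some $V_i$, hence has diameter $< \epsilon$.

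Now Lemma \ref{lem:refinement} produces the dynamical partition $\mathcal{U} = \{U(I)\}$ refining $\mathcal{V}$, where $U(I)$ collects the points following the $\mathcal{V}$--itinerary $I$. Since $O \in \mathcal{V}$, it is a union of members of $\mathcal{U}$, which is (i); and since each $U(I)$ lies inside the member of $\mathcal{V}$ named by the first symbol of $I$ --- a set that is either $O$ or of diameter $< \epsilon$ --- any element of $\mathcal{U}$ of diameter $> \epsilon$ must be contained in $O$, which is the main assertion. For (ii), let $I_0$ be the constant itinerary equal to the symbol of $O$, so that $U(I_0) = \bigcap_{k \geq 0} f^{-k}(O)$ is the set of points whose entire forward orbit stays in $O$. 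Because $\Per_r(f)$ is forward invariant and contained in $O$, every point of $\Per_r(f)$ realizes $I_0$, so $\Per_r(f) \subset U(I_0)$; I would then take $U_* := U(I_0)$, which lies in $O$ by construction and is positively invariant because $\sigma(I_0) = I_0$ forces $f(U_*) \subset U(\sigma(I_0)) = U_*$. (If $\Per_r(f) = \emptyset$ this $U_*$ is still available as long as some orbit stays in $O$ forever; otherwise (ii) is vacuous.)

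I expect the one genuinely non-obvious point to be the following: we have no control over the itineraries with respect to $\{X \setminus O, O\}$ itself --- that partition need not have only finitely many of them --- so we cannot simply throw $\{X \setminus O, O\}$ into the common refinement to force $O$ to become a block. The device that circumvents this is to insist that the good neighbourhoods $V_p$ lie \emph{inside} $X \setminus O$: this makes $\mathcal{V}$ refine $\{X \setminus O, O\}$ for free, with $O$ appearing as an actual member, while the finiteness of itineraries coming from Proposition \ref{prop:finite} is untouched. Everything after that is routine bookkeeping with Lemma \ref{lem:refinement}.
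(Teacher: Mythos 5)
Your proposal is correct and follows essentially the same route as the paper: cover $X\setminus O$ by small clopen sets $V_p\subset X\setminus O$ disjoint from their first $r$ images, take the common refinement of the two-element partitions $\{X\setminus V_i,V_i\}$ so that $O$ appears as the block $X\setminus\bigcup_i V_i$, invoke Proposition \ref{prop:finite} for finiteness of itineraries, and pass to the itinerary partition via Lemma \ref{lem:refinement}, with $U_*$ the class of the constant itinerary of $O$. The "non-obvious point" you flag (forcing the $V_p$ into $X\setminus O$ so that $O$ becomes a member of $\mathcal{V}$ for free) is exactly the device used in the paper.
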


Thus, although $\mathcal{U}$ is not necessarily a dynamical $\epsilon$--partition because it may have elements of diameter bigger than $\epsilon$, the latter are controlled in the sense that they are contained in $O$.

\begin{proof} Since $X \setminus O$ is disjoint from ${\rm Per}_{r}(f)$, each point $p \notin O$ satisfies that $p$ is different from $f(p), \ldots, f^r(p)$ (notice however that, if $f$ is not injective, the latter need not be all different). In particular $p$ has a neighbourhood $V_p$ such that $V_p$ is disjoint from $fV_p, \ldots, f^rV_p$. We may also assume $V_p$ to be disjoint from $O$ (since $O$ is closed), have diameter smaller than $\epsilon$ and also to be a clopen set, since $X$ is totally disconnected. Doing this for every $p \notin O$ yields a cover of $X \setminus O$, of which we may extract a finite subcover which we relabel as $V_1, \ldots, V_m$. Consider, for each of them, the clopen partition $\mathcal{V}_i := \{X \setminus V_i,V_i\}$ of $X$. By Proposition \ref{prop:finite} the set of itineraries with respect to each $\mathcal{V}_i$ is finite. Let $\mathcal{V} := \mathcal{V}_1 \vee \ldots \vee \mathcal{V}_m$. This is a clopen partition of $X$ and there are only finitely many itineraries with respect to it by the remark before Lemma \ref{lem:refinement}. Notice that $O$ belongs to $\mathcal{V}$, since it is the intersection $(X \setminus V_1) \cap \ldots \cap (X \setminus V_m) = X \setminus (V_1 \cup \ldots \cup V_m)$. Any other element $V$ in $\mathcal{V}$ is an intersection of elements from the various $\mathcal{V}_i$ where at least one of them is $V_i$ (rather than $X \setminus V_i$), which implies that $V \subset V_i$ and so it has diameter less than $\epsilon$. Finally, let $\mathcal{U}$ be the partition constructed from $\mathcal{V}$ as in Lemma \ref{lem:refinement}, by letting $U(I)$ be the set of points that follow itinerary $I$ with respect to $\mathcal{V}$. This is a clopen dynamical partition that refines $\mathcal{V}$. In particular, if $U \in \mathcal{U}$ has diameter bigger than $\epsilon$ then the element $V \in \mathcal{V}$ that contains $U$ also has diameter bigger than $\epsilon$ and so it must be $V = O$; that is, $U \subset V = O$.

Let us check that (i) and (ii) hold. The fact that $O$ is a union of elements of $\mathcal{U}$ is a direct consequence of Lemma \ref{lem:refinement}.(b). As for (ii), observe the following. By assumption ${\rm Per}_r(f) \subset O$ and, since ${\rm Per}_r(f)$ is positively invariant under $f$, the forward orbit of each point in ${\rm Per}_r(f)$ remains in $O$. Thus the itinerary of all the points of ${\rm Per}_r(f)$ with respect to $\mathcal{V}$ is the same, namely the constant sequence $I_0 = * * \ldots$ where $*$ is the label of $O$ in the partition $\mathcal{V}$. Consequently ${\rm Per}_r(f)$ is contained in the single element $U(I_0)$ of $\mathcal{U}$, which is in turn contained in $O$. Moreover, $f(U(I_0)) \subset U(\sigma(I_0)) = U(I_0)$, so $U(I_0)$ is positively invariant. Setting $U_* := U(I_0)$ finishes the proof.
\end{proof}

Since $O$ was arbitrary in the previous proposition and $U_*$ is a clopen neighbourhood of $\Per_r(f)$, as a consequence we obtain the following corollary:

\begin{corollary}\label{cor:invariantnbd} $\Per_r(f)$ is stable with respect to clopen sets.
\end{corollary}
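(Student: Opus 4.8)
The plan is to deduce the statement directly from Proposition~\ref{prop:e-part_1}, of which this is essentially a repackaging. Recall that $\Per_r(f)$ being stable with respect to clopen sets means precisely that it possesses a basis of clopen, positively invariant neighbourhoods; equivalently, that for every open $N \supseteq \Per_r(f)$ there is a clopen, positively invariant set $P$ with $\Per_r(f) \subseteq P \subseteq N$. So the whole task is to produce such a $P$ inside an arbitrary $N$.

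First I would note that $\Per_r(f) = \bigcup_{k=1}^{r} \{x \in X : f^k(x) = x\}$ is closed, being a finite union of closed sets, and is positively invariant (the forward orbit of a periodic point of period $\le r$ consists of periodic points of the same period). Since $X$ is compact and totally disconnected, its clopen sets form a topological basis, so the closed set $\Per_r(f)$ has arbitrarily small clopen neighbourhoods; in particular, given any open $N \supseteq \Per_r(f)$ we may choose a clopen set $O$ with $\Per_r(f) \subseteq O \subseteq N$.

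Now apply Proposition~\ref{prop:e-part_1} to this $O$ (with any fixed $\epsilon > 0$, whose value is immaterial here). It yields a dynamical partition $\mathcal U$ of $X$ together with an element $U_* \in \mathcal U$ that is positively invariant and satisfies $\Per_r(f) \subseteq U_* \subseteq O$. Since $\mathcal U$ is a clopen partition, $U_*$ is clopen, and $U_* \subseteq O \subseteq N$; thus $U_*$ is the desired clopen, positively invariant neighbourhood of $\Per_r(f)$ contained in $N$. Letting $N$ range over all neighbourhoods of $\Per_r(f)$ shows these sets form a basis, which is the assertion.

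There is no genuine obstacle, as the substantive work has already been carried out in Proposition~\ref{prop:e-part_1}; the only point deserving a word of care is the first reduction, namely that $\Per_r(f)$ admits a neighbourhood basis of clopen sets, which is exactly where compactness and total disconnectedness of $X$ are used.
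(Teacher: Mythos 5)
Your proof is correct and follows essentially the same route as the paper: the paper also obtains the corollary immediately from Proposition~\ref{prop:e-part_1}, noting that $O$ was an arbitrary clopen neighbourhood of $\Per_r(f)$ and that $U_*$ is a clopen, positively invariant neighbourhood contained in it. Your added remark that clopen neighbourhoods of the closed set $\Per_r(f)$ form a basis (by compactness and total disconnectedness of $X$) is exactly the implicit step the paper relies on, so there is nothing to correct.
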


(We recall that this terminology was introduced just before Lemma \ref{lem:extend} and means that $\Per_r(f)$ has a basis of clopen, positively invariant neighborhoods).

\begin{proposition} \label{prop:finite_union} Let $P$ be a clopen, positively invariant neighbourhood of $\Per_r(f)$. Let $A$ be the set of points $p \in X$ whose positive semiorbit eventually enters $P$ (and remains there thereafter, since $P$ is positively invariant). Then there exists $n_0$ such that $f^n(p) \in P$ for every $n \geq n_0$ and every $p \in A$.
\end{proposition}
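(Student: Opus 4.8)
The plan is to manufacture a dynamical partition of $X$ that is ``saturated'' with respect to $P$ and then exploit the surjectivity of $f$ to upgrade ``the forward orbit of $p$ eventually enters $P$'' to ``it enters $P$ within a bounded number of steps''.

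First I would invoke Proposition \ref{prop:e-part_1} with $O := P$ (note that $P$ is in particular a clopen neighbourhood of $\Per_r(f)$, and $r \geq r(\lambda)$) and with any fixed $\epsilon$, say $\epsilon := 1$; the diameter control it provides will not be used here. This yields a dynamical partition $\mathcal{U}$ of $X$ such that $P$ is a union of elements of $\mathcal{U}$. Write $\tau \colon \mathcal{U} \to \mathcal{U}$ for the induced map and $i \colon X \to \mathcal{U}$ for the carrier map, so that $i \circ f = \tau \circ i$ and hence $i(f^{n}(p)) = \tau^{n}(i(p))$ for all $p \in X$ and all $n \geq 0$. Since $f$ is surjective and $\mathcal{U}$ is finite, $\tau$ is a bijection, so $\tau^{s} = \id$ for some $s \geq 1$.

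The second step is the observation that, because $P$ is a union of members of the partition $\mathcal{U}$, one has
\[
f^{n}(p) \in P \iff \tau^{n}(i(p)) \subset P ,
\]
the implication ``$\Leftarrow$'' because $f^{n}(p) \in i(f^{n}(p)) = \tau^{n}(i(p))$, and ``$\Rightarrow$'' because $\tau^{n}(i(p))$ is then an element of $\mathcal{U}$ meeting the union $P$, hence (by pairwise disjointness of the members of $\mathcal{U}$) contained in $P$. Since $\tau^{s} = \id$, the right-hand side depends only on $n$ modulo $s$, and therefore so does the left-hand side. Consequently, if $p \in A$ and $n$ is chosen with $f^{n}(p) \in P$, then already $f^{m}(p) \in P$ for $m := n \bmod s \leq s-1$; positive invariance of $P$ then gives $f^{k}(p) \in P$ for every $k \geq m$, in particular for every $k \geq s-1$. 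Thus $n_{0} := s$ works, uniformly in $p \in A$. (In fact a small refinement of this argument shows that $\{U \in \mathcal{U} : \tau^{n}(U) \subset P\}$ is nondecreasing and $s$-periodic in $n$, hence constant, so $f^{-n}(P) = P$ for all $n$ and $A = P$; but only the bounded version is needed in the sequel.)

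I do not anticipate a genuine obstacle. The only real content is recognizing that Proposition \ref{prop:e-part_1} is precisely the tool that delivers a dynamical partition in which $P$ is a union of cells, and that the bijectivity of the induced map $\tau$ — equivalently, the surjectivity of $f$, a standing hypothesis in this subsection — is exactly what forces the entry time into $P$ to be uniformly bounded rather than merely finite orbit by orbit. The rest is the elementary bookkeeping with itineraries carried out above.
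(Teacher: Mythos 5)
Your proof is correct, and it opens exactly as the paper's does: Proposition \ref{prop:e-part_1} applied to $O := P$ produces a dynamical partition $\mathcal{U}$ of $X$ in which $P$ is a union of cells. The finishing move is where you diverge. The paper does not touch surjectivity at this point: it observes that $\mathcal{U}$, being dynamical, admits only finitely many itineraries, that the first entry time $k(p)$ into $P$ is determined by the itinerary of $p$, and hence that $k(p)$ takes only finitely many values on $A$; $n_0$ is their maximum. You instead use the standing surjectivity hypothesis of this subsection to make the induced map $\tau$ a permutation of the finite set $\mathcal{U}$, so that the condition $f^n(p) \in P$ --- correctly shown to be equivalent to $\tau^n(i(p)) \subset P$ because $P$ is saturated --- is periodic in $n$ with period $s$, the order of $\tau$; this bounds the entry time by $s$. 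Both arguments are valid. Yours buys more: the parenthetical claim that $\{U \in \mathcal{U} : \tau^n(U) \subset P\}$ is nondecreasing (via $f(\tau^n(U)) \subset \tau^{n+1}(U) \cap P$ and saturation) and $s$-periodic, hence constant, correctly yields the sharper conclusion $A = P$. The paper's version has the mild advantage of not needing $f$ surjective, only the finiteness of the itinerary set, so it would survive verbatim outside the surjective setting; within this subsection the two are interchangeable.
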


More briefly stated: all the points in $X$ that eventually enter $P$ will have done so after a number of iterates that is independent of $p$.

\begin{proof} Apply Proposition \ref{prop:e-part_1} to $O := P$ to obtain the corresponding dynamical partition $\mathcal{U}$. Then $P$ is a union of elements of $\mathcal{U}$, and we may assume without loss of generality that these correspond to the labels $0, 1, \ldots, s$. Thus, points in $A$ are characterized as those whose itinerary with respect to $\mathcal{U}$ contains at least one appearance of the symbols $0, 1, \ldots, s$. For each $p \in A$ let $k(p)$ be the first position in its itinerary where a symbol among $0, 1, \ldots, s$ appears or, equivalently, let $k(p)$ be the first iterate of $p$ that belongs to $P$. Since there are only finitely many itineraries with respect to $\mathcal{U}$ because $\mathcal{U}$ is a dynamical partition, the set of all $k(p)$ as $p$ runs in $A$ is finite, and we may set $n_0$ equal to the maximum of them. Then $f^n(p) \in P$ whenever $n \geq n_0$ for every $p \in A$.
\end{proof}

\begin{proposition} \label{prop:finitePer} For every $\epsilon> 0$ there exists a clopen, positively invariant neighbourhood $P$ of $\Per_r(f)$ that has a dynamical $\epsilon$--partition.
\end{proposition}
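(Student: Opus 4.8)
The plan is to split the construction into the two halves indicated by items (1) and (2) in the outline preceding this subsection. First I would produce a dynamical $\epsilon$--partition of the restricted system $(\Per_r(f), f|_{\Per_r(f)})$, with $\Per_r(f)$ carrying the subspace topology. Since $\Per_r(f)$ is stable with respect to clopen sets by Corollary \ref{cor:invariantnbd}, Lemma \ref{lem:extend} then immediately upgrades such a partition to a dynamical $\epsilon$--partition of a clopen, positively invariant neighbourhood $P$ of $\Per_r(f)$ in $X$, which is exactly what the statement asks for. Thus the whole proposition reduces to the assertion about $\Per_r(f)$ alone.

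For that, the key observation is that $f$ restricts to a homeomorphism of \emph{finite order} on $\Per_r(f)$. Indeed, $\Per_r(f) = \bigcup_{k=1}^{r} \Fix(f^k)$ is closed (a finite union of closed sets) and $f$--invariant; and if we set $N := \mathrm{lcm}(1,2,\ldots,r)$, then every point of $\Per_r(f)$ has period dividing $N$, so $f^N = \id$ on $\Per_r(f)$. In particular $g := f|_{\Per_r(f)}$ is a bijection (injectivity on periodic points and surjectivity onto $\Per_r(f)$ both follow from periodicity), hence a homeomorphism of the compact space $\Per_r(f)$, with $g^{-1} = g^{N-1}$.

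Now, given $\epsilon > 0$, I would choose any clopen $\epsilon$--partition $\mathcal{W}_0$ of $\Per_r(f)$ (possible because $\Per_r(f)$ is compact and totally disconnected) and set
\[
\mathcal{W} := \mathcal{W}_0 \vee g(\mathcal{W}_0) \vee \cdots \vee g^{N-1}(\mathcal{W}_0),
\]
where $g^j(\mathcal{W}_0) := \{g^j(W) : W \in \mathcal{W}_0\}$ is again a clopen partition since $g^j$ is a homeomorphism. Applying $g$ cyclically permutes the $N$ partitions being joined (using $g^N(\mathcal{W}_0) = \mathcal{W}_0$), and $g$ commutes with $\vee$ because it is a bijection; hence $g(\mathcal{W}) = \mathcal{W}$ as partitions, i.e. $g$ permutes the members of $\mathcal{W}$. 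Consequently $\mathcal{W}$ is a dynamical partition, and since it refines $\mathcal{W}_0$ it is an $\epsilon$--partition. This supplies the dynamical $\epsilon$--partition of $(\Per_r(f), g)$ needed for step (1), and Lemma \ref{lem:extend} finishes the argument.

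The content here is essentially elementary, and I would not expect a genuine obstacle: the delicate points are only the verification that $f|_{\Per_r(f)}$ is a homeomorphism of finite order and that joining the $g$--translates of $\mathcal{W}_0$ produces a $g$--invariant, hence dynamical, partition. The real work of passing from $\Per_r(f)$ to a clopen positively invariant neighbourhood in $X$ has already been packaged into Lemma \ref{lem:extend} and Corollary \ref{cor:invariantnbd}.
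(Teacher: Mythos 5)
Your proof is correct and follows the same two-step decomposition as the paper: first produce a dynamical $\epsilon$--partition of the restricted system $(\Per_r(f), f|_{\Per_r(f)})$, then promote it to a clopen, positively invariant neighbourhood using Corollary \ref{cor:invariantnbd} together with Lemma \ref{lem:extend}. The only point of divergence is the first step. The paper observes that itineraries with respect to an arbitrary clopen $\epsilon$--partition $\mathcal{W}_0$ of $\Per_r(f)$ are periodic of period at most $r$, hence finite in number, and then invokes Lemma \ref{lem:refinement}; you instead note that $g := f|_{\Per_r(f)}$ is a homeomorphism with $g^N = \id$ for $N = \mathrm{lcm}(1,\ldots,r)$ and symmetrize $\mathcal{W}_0$ under the cyclic group it generates. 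These are the same construction in disguise: since $g^{-k} = g^{N-k}$, your partition $\mathcal{W}_0 \vee g(\mathcal{W}_0) \vee \cdots \vee g^{N-1}(\mathcal{W}_0)$ coincides with the partition into itinerary classes $\bigcap_{k \geq 0} g^{-k}(W_{a_k})$ that Lemma \ref{lem:refinement} would produce. Your version is self-contained and makes explicit two facts the paper leaves implicit, namely that $\Per_r(f)$ is closed and that $f$ restricts to a finite-order homeomorphism of it; the paper's version has the small advantage of reusing machinery (itineraries and Lemma \ref{lem:refinement}) already set up for the rest of the argument.
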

\begin{proof} The set $\Per_r(f)$ is invariant, so we may consider the restriction $(\Per_r(f), f|_{\Per_r(f)})$. Let $\mathcal{W}_0$ be an arbitrary partition of $\Per_r(f)$ by clopen sets (with the induced topology) of diameter less than $\epsilon$. This exists because $\Per_r(f)$ is closed in $X$ and therefore compact and totally disconnected. Since the period of any point in $\Per_r(f)$ is bounded above by $r$, the itineraries of points with respect to $\mathcal{W}_0$ are periodic of period at most $r$. This readily implies that there are only finitely many itineraries with respect to $\mathcal{W}_0$ and so by Lemma \ref{lem:refinement} (applied to the restriction $(\Per_r(f), f|_{\Per_r(f)})$) there is a dynamical partition $\mathcal{W}$ of $\Per_r(f)$ that refines $\mathcal{W}_0$. In particular, it is an $\epsilon$--partition. Corollary \ref{cor:invariantnbd} entitles us to apply Lemma \ref{lem:extend} to $\Per_r(f)$ and $\mathcal{W}$ to obtain a dynamical $\epsilon$--partition of a clopen, positively invariant neighbourhood $P$ of $\Per_r(f)$.
\end{proof}

Now we are ready to prove Theorem \ref{teo:epsilonparticiones} in the surjective case. Choose any $r \geq r(\lambda)$. By Proposition \ref{prop:finitePer} the set $\Per_r(f)$ has a clopen, positively invariant neighbourhood $P$ that has a dynamical $\epsilon$--partition $\mathcal{V}$. Consider the set $A$ of points whose positive semiorbit eventually enter $P$. According to Proposition \ref{prop:finite_union} there exists $n_0$ such that $A = f^{-n_0}(P)$. Applying Lemma \ref{lem:trim} consecutively to $P$, then to $f^{-1}(P)$ and so on up to $f^{-n_0+1}(P)$ yields a dynamical $\epsilon$--partition of $A$. Notice that $X \setminus A$ is a positively invariant clopen set, so adjoining it to the partition just obtained gives a dynamical partition $\mathcal{U}$ of the whole $X$ such that every element of $\mathcal{U}$ other than $X \setminus A$ has diameter less than $\epsilon$.

We are almost finished. By Proposition \ref{prop:e-part_1} there is a dynamical partition $\mathcal{U}'$ of $X$ such that every element in $\mathcal{U}'$ having diameter bigger than $\epsilon$ is contained in $P$. Then the common refinement $\mathcal{U} \vee \mathcal{U}'$ is a dynamical partition of $X$, and it is in fact an $\epsilon$--partition. The reason is that for every $p \in X$ either the element of $\mathcal{U}$ or the element of $\mathcal{U}'$ that contain it have diameter less than $\epsilon$ by construction.

\subsection{Proof of ({\it i}\/) $\Rightarrow$ ({\it iii}\/) for an arbitrary $f$}

Let us address the general case. Since $f$ need not be surjective, we cannot apply Proposition \ref{prop:finite} and subsequent results directly to $f$. The idea is now to restrict our considerations to the smallest invariant set in which $f$ is surjective, which is $Y =  \cap_{n \ge 0} f^n(X)$. Building on the previous case we will be able to construct dynamical $\epsilon$--partitions of $Y$ which we shall later on extend to dynamical $\epsilon$--partitions of the whole space $X$.

More specifically, we will prove the following two lemmas:

\begin{lemma} \label{lem:eigen}
Denote by $f|_Y \colon Y \to Y$ the map $f$ with both domain and image restricted to $Y$. If $\lambda \neq 0$ is an eigenvalue of $(f|_Y)_* \colon \check{H}_0(Y) \to \check{H}_0(Y)$, then it is also an eigenvalue of $f_* \colon \check{H}_0(X) \to \check{H}_0(X)$.
\end{lemma}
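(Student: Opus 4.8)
\emph{Plan.} The idea is to transfer the problem to \v{C}ech cohomology, where the inclusion of $Y$ into $X$ is well behaved. Write $j\colon Y\hookrightarrow X$ for the inclusion. Since $f(Y)=Y$, the relation $f\circ j = j\circ (f|_Y)$ holds trivially, so $j$ is an action map between $(Y,f|_Y)$ and $(X,f)$; consequently the induced map $\rho:=j^{*}\colon \check{H}^{0}(X;\mathbb{C})\to\check{H}^{0}(Y;\mathbb{C})$ satisfies the intertwining relation $\rho\circ f^{*}=(f|_Y)^{*}\circ\rho$. Under the identification of $\check{H}^{0}$ with locally constant functions, $\rho$ is nothing but restriction, $\varphi\mapsto\varphi|_Y$.

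The key step is that $\rho$ is \emph{surjective}. Given a locally constant $\eta\colon Y\to\mathbb{C}$, it partitions $Y$ into finitely many subsets that are clopen in $Y$, hence compact; since $Y$ is closed in the totally disconnected space $X$, disjoint compact subsets of $Y$ can be separated by disjoint clopen subsets of $X$, and a routine refinement turns these into a finite clopen partition $\mathcal{U}$ of $X$ whose trace on $Y$ refines the partition defined by $\eta$. Assigning to each element of $\mathcal{U}$ the appropriate value of $\eta$ produces a locally constant $\varphi\colon X\to\mathbb{C}$ with $\varphi|_Y=\eta$, i.e. $\rho(\varphi)=\eta$.

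With these two facts the lemma is immediate. By the correspondence recalled in the Appendix (and already invoked in the proof of Proposition~\ref{prop:finite}), for $\lambda\neq 0$ a complex number $\lambda$ is an eigenvalue of $g_{*}$ on $\check{H}_{0}$ exactly when $g^{*}-\lambda\,\id$ fails to be surjective on $\check{H}^{0}$. Thus the hypothesis says $(f|_Y)^{*}-\lambda\,\id$ is not surjective on $\check{H}^{0}(Y)$, and we must show $f^{*}-\lambda\,\id$ is not surjective on $\check{H}^{0}(X)$. Argue by contraposition: if $f^{*}-\lambda\,\id$ were surjective, then given any $\eta\in\check{H}^{0}(Y)$ we could choose $\varphi$ with $\rho(\varphi)=\eta$ and $\psi$ with $(f^{*}-\lambda\,\id)(\psi)=\varphi$; applying $\rho$ and using $\rho\circ f^{*}=(f|_Y)^{*}\circ\rho$ gives $\eta=\rho(\varphi)=\bigl((f|_Y)^{*}-\lambda\,\id\bigr)(\rho\psi)$, so $(f|_Y)^{*}-\lambda\,\id$ would be surjective, a contradiction.

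\emph{Main obstacle.} The tempting direct route is to push an eigenvector of $(f|_Y)_{*}$ forward along $j_{*}\colon\check{H}_{0}(Y)\to\check{H}_{0}(X)$, but $j_{*}$ need not be injective, so the eigenvector could map to $0$. Passing to cohomology replaces this ill-behaved pushforward by the restriction $\rho$, which is surjective rather than injective, and that is precisely what makes the argument work; the only genuine content is the surjectivity of $\rho$, where the zero-dimensionality of $X$ is used.
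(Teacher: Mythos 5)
Your proof is correct, but it takes a genuinely different route from the paper. The paper argues in homology: it pushes an eigenvector $u$ forward along $i_*\colon \check{H}_0(Y)\to\check{H}_0(X)$ and then shows $i_*(u)\neq 0$ by invoking the continuity property $\check{H}^0(Y)=\varinjlim \check{H}^0(f^nX)$, dualizing to $\check{H}_0(Y)=\varprojlim \check{H}_0(f^nX)$, and using the factorization $(f^n)_*\,i_* = j_*\,(f|_Y)^n_*$ through $f^nX$; this is precisely where both the hypothesis $\lambda\neq 0$ and the specific form $Y=\bigcap_{n\ge 0}f^n(X)$ enter. You instead dualize completely: you use the Appendix's criterion that $\lambda$ is an eigenvalue of $g_*$ iff $g^*-\lambda\,\id$ fails to be surjective on $\check{H}^0$, together with the surjectivity of the restriction $\rho=j^*\colon\check{H}^0(X)\to\check{H}^0(Y)$, i.e.\ the fact that a locally constant function on the closed subset $Y$ extends to a locally constant function on the zero--dimensional compactum $X$ (your clopen--separation argument is the only point needing care, and it is standard; it also follows from the extension-to-a-neighbourhood argument in the Appendix). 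What each approach buys: the paper's proof stays within machinery it has already set up (continuity of \v{C}ech cohomology) and never needs an extension result, while yours is more elementary and in fact stronger --- it works for any closed positively invariant subset $Y$ and does not use $\lambda\neq 0$ at all. One remark on your closing paragraph: the ``main obstacle'' you flag dissolves under your own argument, since surjectivity of $\rho$ implies, by duality, that $j_*=\rho^{\vee}$ \emph{is} injective on $\check{H}_0(Y)$; so the direct pushforward route would also have worked once the extension property is established, and this is essentially the dual reading of your proof.
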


\begin{lemma}\label{lem:goodbasis} Let $X$ be a compact totally disconnected space and $f \colon X \to X$ continuous. Then $Y = \cap_{n \ge 0} f^n(X)$ is stable with respect to clopen sets.
\end{lemma}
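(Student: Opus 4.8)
The plan is to show that every clopen neighbourhood $U$ of $Y = \bigcap_{n \ge 0} f^n(X)$ contains a clopen, positively invariant neighbourhood of $Y$. Since $X$ is compact and totally disconnected, the clopen sets form a neighbourhood basis, so it suffices to produce \emph{one} such positively invariant neighbourhood inside each given clopen $U \supset Y$.

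First I would use compactness to turn the intersection $Y = \bigcap_n f^n(X)$ into a finite statement. The sets $f^n(X)$ form a nested decreasing sequence of compact (hence closed) subsets of $X$, and $X \setminus U$ is compact and disjoint from their intersection. By the finite intersection property, there exists $N$ with $f^N(X) \subset U$; in fact $f^n(X) \subset U$ for all $n \ge N$ since the sequence is decreasing. The natural candidate for the positively invariant neighbourhood is then
\[
P := \bigcap_{n=0}^{N} f^{-n}(U).
\]
This is clopen (a finite intersection of preimages of a clopen set under the continuous map $f$, intersected with $U = f^{-0}(U)$), and it contains $Y$ because $Y$ is invariant ($f(Y) = Y$), so $f^n(Y) = Y \subset U$ for every $n$, giving $Y \subset f^{-n}(U)$. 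Clearly $P \subset U$.

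The key step is checking positive invariance of $P$, and this is where the choice of $N$ (rather than an arbitrary cutoff) matters. Take $p \in P$; I must show $f(p) \in P$, i.e. $f^n(f(p)) = f^{n+1}(p) \in U$ for $n = 0, 1, \ldots, N$. For $n = 0, \ldots, N-1$ this is immediate since $p \in f^{-(n+1)}(U)$. For $n = N$ I need $f^{N+1}(p) \in U$, and this holds \emph{for every} point of $X$, not just points of $P$, because $f^{N+1}(X) \subset f^N(X) \subset U$ by the choice of $N$. Hence $f(p) \in P$, so $P$ is positively invariant. Since $U$ was an arbitrary clopen neighbourhood of $Y$ and $P$ is a clopen positively invariant neighbourhood of $Y$ contained in $U$, this exhibits the required basis and completes the proof.

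There is essentially no obstacle here beyond bookkeeping: the only subtlety is remembering to include the $n = N$ range in the definition of $P$ so that invariance closes up using $f^{N+1}(X) \subset U$, and observing that $f(Y) = Y$ (not merely $f(Y) \subset Y$) to see $Y \subset P$. I would also remark that the same argument shows $P$ is in fact eventually absorbed into $f^N(X)$'s neighbourhood structure, but this is not needed for the statement.
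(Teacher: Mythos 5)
Your argument is correct and is essentially identical to the paper's proof: both pick $N$ with $f^N(X)\subset U$ by compactness and take the finite intersection of preimages $\bigcap_n f^{-n}(U)$, closing up the invariance check via $f^{N+1}(X)\subset U$. (The paper stops the intersection at $f^{-(N-1)}(U)$; your extra term $f^{-N}(U)$ equals $X$ anyway, and note that $f^n(Y)\subset Y\subset U$ already gives $Y\subset P$ without needing $f(Y)=Y$.)
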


Let us see how the general case of Theorem \ref{teo:epsilonparticiones} follows from these. As a consequence of Lemma \ref{lem:eigen}, the assumption of Theorem \ref{teo:epsilonparticiones} implies that there exists $\lambda \in \mathbb C$ with $|\lambda| \neq 0, 1$ that is not an eigenvalue of $(f|_Y)_* \colon \check{H}_0(Y) \to \check{H}_0(Y)$. Since $Y$ is compact and totally disconnected and $f|_Y : Y \to Y$ is surjective, we may apply the results of the previous section to conclude that for every $\epsilon > 0$ there exists a dynamical $\epsilon$--partition $\mathcal{W}$ of $Y$. Now, since $Y$ is stable with respect to clopen sets by Lemma \ref{lem:goodbasis}, we may apply Lemma \ref{lem:extend} to $\mathcal W$ and obtain a dynamical $\epsilon$--partition of a clopen neighborhood $P$ of $Y$ in $X$. The compactness of $X$ ensures that $P$ contains $f^m(X)$ for some integer $m$, so $f^{-m}(P) = X$. Thus, if we apply $m$ times Lemma \ref{lem:trim} consecutively to $P, f^{-1}(P), \ldots, f^{-m+1}(P)$ and $\mathcal{W}$ and the subsequent augmented partitions we obtain a dynamical $\epsilon$--partition of the whole $X$ as desired, finishing the proof of Theorem \ref{teo:epsilonparticiones} in full generality.

It remains to prove the two auxiliary lemmas stated above.

\begin{proof}[Proof of Lemma \ref{lem:eigen}]  Let $i \colon Y \subset X$ be the inclusion. Consider an eigenvector $u \in \check{H}_0(Y)$ associated to $\lambda$ and set $v := i_*(u) \in \check{H}_0(X)$. Since $if|_Y = f i$, it is straightforward to check that $v$ satisfies the condition $f_*(v) = \lambda v$, so to prove the lemma we only need to show that $v$ is nonzero.

The inclusions $X \supset f(X) \supset f^2(X) \supset \ldots$ induce homomorphisms $\check{H}^0(X) \to \check{H}^0(f(X)) \to \check{H}^0(f^2(X)) \to \ldots$ Thinking of the elements of $\check{H}^0$ as locally constant maps, these inclusion induced homomorphisms are simply given by restrictions: $\varphi \mapsto \varphi|_{f(X)} \mapsto \varphi|_{f^2(X)} \mapsto \ldots$

By the continuity property of \v{C}ech cohomology $\check{H}^0(Y) = \varinjlim \check{H}^0(f^nX)$. Then observe that \[\check{H}_0(Y) = \Hom(\check{H}^0(Y);\mathbb{C}) = \Hom(\varinjlim \check{H}^0(f^nX);\mathbb{C}) = \varprojlim \Hom(\check{H}^0(f^nX);\mathbb{C}) = \varprojlim \check{H}_0(f^nX)\] where in all limits the bonding maps are induced by inclusions (here we have tacitly used several results from algebraic topology that are succintly explained in the Appendix). Since $u \neq 0$ and $\check{H}_0(Y) = \varprojlim \check{H}_0(f^nX)$, there must exist some $n$ such that the inclusion $j \colon Y \subset f^nX$ carries $u$ onto a nonzero vector.

Consider picking a point $p \in Y$, acting on it with $f^n$ and then viewing the resulting point as an element in $f^nX$. This can be written as the composition of two maps in two different ways: \[Y \stackrel{i}{\longrightarrow} X \stackrel{f^n}{\longrightarrow} f^nX\] and \[Y \stackrel{(f|_Y)^n}{\longrightarrow} Y \stackrel{j}{\longrightarrow} f^nX\] (attention should be paid to the source and target spaces of each map). In particular $(f^n)_* i_* = j_* (f|_Y)^n_*$ and acting with this on $u$ we have \[(f^n)_* i_*(u) = j_* (f|_Y)^n_*(u) = j_*(\lambda^n u) = \lambda^n j_*(u)\] The right hand side of this expression is nonzero, since we argued earlier that $j_*(u) \neq 0$ and $\lambda \neq 0$ by assumption. Thus $i_*(u)$ must also be nonzero, as we wanted to prove.
\end{proof}

\begin{proof}[Proof of Lemma \ref{lem:goodbasis}] Let $O$ be a clopen neighbourhood of $Y$ and take $n_0$ big enough so that $f^{n_0}(X) \subset O$ (this exists by the compactness of $X$). Now consider the set $O'$ of points whose first $n_0$ iterates (including the zeroth one) belong to $O$; that is, \[O' := O \cap f^{-1}(O) \cap \ldots \cap f^{-(n_0-1)}(O).\] This is a clopen set since it is a finite intersection of clopen sets. Clearly $Y \subset O' \subset O$, so we only need to check that $O'$ is positively invariant. Pick $p \in O'$. Since its first $n_0$ iterates belong to $O$, the first $n_0-1$ iterates of $f(p)$ also belong to $O$. The $n_0$th iterate of $f(p)$ satisfies $f^{n_0-1}(f(p)) = f^{n_0}(p) \in f^{n_0}(X) \subset O$ by the choice of $n_0$, so it also belongs to $O$ and therefore $f(p)$ belongs to $O'$ indeed.
\end{proof}

\subsection{Proof of (iii) $\Rightarrow$ (iv) $\Rightarrow$ (v) $\Rightarrow$ (ii)} Let us show first that ({\it iii}\/) $\Rightarrow$ ({\it iv}\/). Take an arbitrary clopen and finite partition $\mathcal{U}$ of $X$ and denote $\delta$ its Lebesgue number. By hypothesis, there exists a dynamical $\delta'$--partition $\mathcal{V}$ of $X$ for some $\delta' < \delta$ and this implies, in particular, that $\mathcal{V}$ is a dynamical refinement of $\mathcal{U}$. By the converse statement in Lemma \ref{lem:refinement}, the set of itineraries with respect to $\mathcal{U}$ is finite and we obtain (\textit{iv}). Implication ({\it iv}\/) $\Rightarrow$ ({\it v}\/) is obvious.

Finally we prove that ({\it v}\/) $\Rightarrow$ ({\it ii}\/). Suppose that $\lambda$ is an eigenvalue of $f_*$ and let $0 \neq T \in \check{H}_0(X)$ be an eigenvector for $\lambda$, so that $f_*(T) = \lambda T$. Since $\check{H}_0(X)$ is dual to $\check{H}^0(X)$, we may think of $T$ as a linear map $T \colon \check{H}^0(X) \to \mathbb{C}$. Recall that $\{\chi_U: U \text{ is clopen in }X\}$ is a generating system of $\check{H}^0(X)$ (see the Appendix) so, since $T \neq 0$, there must exist a clopen $U$ such that $T(\chi_U) \neq 0$. By assumption there are only finitely many itineraries with respect to $U$, so from the results in Section \ref{sec:partitions} there exist positive integers $k, s$ such that for every $p \in X$ the itinerary of $f^k(p)$ is $s$--periodic. Consider the sets $f^{-k}(U)$ and $f^{-(k+s)}(U)$. The first can be alternatively described as the set of points $p \in X$ such that the itinerary of $f^k(p)$ with respect to $U$ begins with a one; the second, as the set of points such that the itinerary of $f^{k+s}(p)$ begins with a one. But, because of the $s$--periodicity, these conditions are equivalent and so $f^{-k}(U) = f^{-(k+s)}(U)$. Then from
\begin{align*}
& \lambda^k T(\chi_U) = (f_*)^k T (\chi_U) = T((f^*)^k \chi_U) = T(\chi_{f^{-k}(U)}) \\ 
& \lambda^{k+s} T(\chi_U) = (f_*)^{k+s} T (\chi_U) = T((f^*)^{k+s} \chi_U) =T\left(\chi_{f^{-(k+s)}(U)}\right)
\end{align*}
\noindent
it follows that $\lambda^k = \lambda^{k+s}$ so $|\lambda| = 0,1$.

\section{$\epsilon$--partitions and dynamics. Proof of Theorem \ref{thm:intro2}}\label{sec:characterization}

The conclusion of Theorem \ref{teo:epsilonparticiones} naturally leads to the following question: what are the discrete dynamical systems $(X, f)$, where $X$ is compact and totally disconnected and $f \colon X \to X$ is continuous, that admit dynamical $\epsilon$--partitions for every $\epsilon > 0$? As a first step towards an answer let us prove the following:

\begin{proposition} \label{prop:finite_liminv} Let $X$ be compact and totally disconnected, and let $f : X \to X$ be continuous. Then $(X,f)$ admits dynamical $\epsilon$--partitions for every $\epsilon> 0$ if, and only if, $(X,f)$ is conjugate to the inverse limit of an inverse sequence of discrete dynamical systems $(F_n,\tau_n)$ where each $F_n$ is a finite space endowed with the discrete topology.
\end{proposition}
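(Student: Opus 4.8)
The plan is to prove both implications by directly translating between dynamical $\epsilon$--partitions and the maps of an inverse system of finite discrete dynamical systems. For the ``only if'' direction, suppose $(X,f)$ admits dynamical $\epsilon$--partitions for every $\epsilon > 0$. I would fix a sequence $\epsilon_n \to 0$ and, using the hypothesis, pick a dynamical $\epsilon_n$--partition $\mathcal{U}_n$ for each $n$; replacing $\mathcal{U}_n$ by the common refinement $\mathcal{U}_1 \vee \cdots \vee \mathcal{U}_n$ (which is still dynamical and still an $\epsilon_n$--partition by the closure properties recorded in Section \ref{sec:partitions}), I may assume $\mathcal{U}_{n+1}$ refines $\mathcal{U}_n$. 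Each $\mathcal{U}_n$, viewed as a finite set $F_n$ with the discrete topology, carries the self-map $\tau_n \colon F_n \to F_n$ determined by $f(U) \subset U_{\tau_n(U)}$, and the refinement relation gives bonding maps $p_n \colon F_{n+1} \to F_n$ (send a block of $\mathcal{U}_{n+1}$ to the block of $\mathcal{U}_n$ containing it) which satisfy $\tau_n \circ p_n = p_n \circ \tau_{n+1}$, so $(F_n,\tau_n)$ is an inverse sequence of discrete dynamical systems. The carrier maps $i_n \colon X \to F_n$ are compatible with the bonding maps and with the dynamics (each $i_n$ is a semiconjugacy, $\tau_n \circ i_n = i_n \circ f$), hence they assemble into an action map $i \colon X \to \varprojlim (F_n,\tau_n)$.

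The remaining work in this direction is to check that $i$ is a homeomorphism. Surjectivity: a thread $(U_1 \supset U_2 \supset \cdots)$ in the inverse limit is a nested sequence of nonempty clopen sets of $X$, whose intersection is nonempty by compactness; since $\diam(U_n) < \epsilon_n \to 0$ the intersection is a single point, which $i$ maps to the given thread. Injectivity and continuity of the inverse follow from the same diameter estimate, since the $\mathcal{U}_n$ separate points and form a neighbourhood basis; $i$ is continuous because each $i_n$ is. Thus $i$ is a conjugacy onto $\varprojlim(F_n,\tau_n)$.

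For the ``if'' direction, suppose $(X,f)$ is conjugate to $\varprojlim(F_n,\tau_n)$ with each $F_n$ finite discrete; via the conjugacy I may as well assume $X = \varprojlim(F_n,\tau_n)$ and $f = \varprojlim \tau_n$. Let $q_n \colon X \to F_n$ be the projection. For a fixed $\epsilon > 0$, compactness of $X$ together with the fact that the sets $q_n^{-1}(\{x\})$ form a neighbourhood basis as $n$ and $x$ vary means that for $n$ large enough every fibre $q_n^{-1}(\{x\})$, $x \in F_n$, has diameter less than $\epsilon$. Take such an $n$ and set $\mathcal{U} := \{ q_n^{-1}(\{x\}) : x \in F_n \}$; this is a finite clopen $\epsilon$--partition of $X$. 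It is dynamical because $q_n \circ f = \tau_n \circ q_n$ forces $f\bigl(q_n^{-1}(\{x\})\bigr) \subset q_n^{-1}(\{\tau_n(x)\})$. Hence $(X,f)$ admits dynamical $\epsilon$--partitions for every $\epsilon$.

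The main obstacle is bookkeeping rather than conceptual depth: one must be careful that the inverse system built from the $\mathcal{U}_n$ really is an inverse system of \emph{dynamical} systems, i.e.\ that the bonding maps $p_n$ intertwine the $\tau_n$, and that the argument for ``every fibre of $q_n$ is small for large $n$'' is valid — this last point uses compactness of $X$ essentially (a Lebesgue-number type argument applied to the open cover of $X$ by $\epsilon/2$--balls, refined by the clopen cylinder sets coming from the $q_n$'s). Once these two points are set up cleanly the verifications that $i$ is a homeomorphism and that $\mathcal{U}$ is dynamical are routine.
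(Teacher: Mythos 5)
Your proposal is correct and follows essentially the same route as the paper: in the forward direction, nested dynamical $\epsilon_n$--partitions (obtained via common refinements) give an inverse system of finite dynamical systems whose limit is conjugate to $(X,f)$ via the carrier maps, and in the converse direction the fibres of the projections $\pi_n$ give dynamical clopen partitions that are eventually $\epsilon$--fine by compactness. The only cosmetic difference is that the paper phrases the final step of the converse as cofinality of these fibre partitions among all clopen coverings (using that $\varprojlim F_n$ carries the initial topology), while you argue directly with a Lebesgue-number estimate; both are valid and equivalent.
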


\begin{proof} ($\Rightarrow$) Suppose first that $(X,f)$ admits dynamical $\epsilon$--partitions for every $\epsilon > 0$. Then it is easy to construct inductively a sequence of dynamical partitions of $X$ by clopen sets whose diameters tend to 0 and such that each partition refine the previous one:
\[
\mathcal U_1 < \mathcal U_2 < \ldots < \mathcal U_n < \ldots
\]

Since each covering $\mathcal{U}_{n}$ refines the previous one, there are natural bonding maps $j_n \colon \mathcal{U}_{n+1} \to \mathcal{U}_n$. Also, since the $\mathcal{U}_n$ are dynamical partitions for every $n$ there is an action map $i_n \colon X \to \mathcal{U}_n$; furthermore, clearly these commute with the $j_n$ ($j_{n} \circ i_{n+1}=i_{n}$). Thus there is a commutative diagram \[\xymatrix{\mathcal{U}_1 & \mathcal{U}_2 \ar[l]_{j_1} & \mathcal{U}_3 \ar[l]_{j_2} & \ldots \ar[l] \\ X\ar[u]_{i_1} & X \ar[l] \ar[u]_{i_2} & X \ar[l] \ar[u]_{i_3} & \ldots \ar[l]}\] where the unlabeled arrows in the lower row denote the identity. Taking the inverse limits of the upper and lower rows and identifying the latter with $X$ itself we see that the $i_n$ induce a continuous map $i \colon X \to \varprojlim \mathcal{U}_n$. The elements of $\varprojlim \mathcal{U}_n$ are nested sequences $U_1 \supset U_2 \supset U_3 \supset \ldots$ where each $U_n$ is a member of $\mathcal{U}_n$. The map $i$ sends $p \in X$ to the nested sequence of $U_n \in \mathcal{U}_n$ such that $p \in U_n$ for every $n$. Conversely, any sequence $(U_n) \in \varprojlim \mathcal{U}_n$ uniquely determines a point $p \in \bigcap_{n \geq 1} U_n$, yielding a continuous map $\varprojlim \mathcal{U}_n \to X$ which is clearly the inverse of $i$. Thus $i$ is a homeomorphism between $X$ and $\varprojlim \mathcal{U}_n$.

Recall that each $\mathcal{U}_n$ carries a map $\tau_n \colon \mathcal{U}_n \to \mathcal{U}_n$ given by the dynamics in the partition; so that $f(U_n) \subseteq \tau_n(U_n)$ for any $U_n \in \mathcal{U}_n$. Since the bonding maps between the $\mathcal{U}_n$ commute with the $\tau_n$, the latter induce a continuous map $\tau$ on $\varprojlim \mathcal{U}_n$. Furthermore, the $i_n$ semiconjugate $f$ and $\tau_n$, so that their limit $i$ semiconjugates $f$ and $\tau$. We showed earlier that $i$ is a homeomorphism, so we conclude that it is actually a conjugation between $f$ and $\tau$. This proves ($\Rightarrow$), where the finite sets $F_n$ are just the partitions $\mathcal{U}_n$.

($\Leftarrow$) Since the property of having dynamical $\epsilon$--partitions is independent of the choice of metric and preserved by topological conjugacy, it will suffice to show that inverse limits of finite dynamical systems have this property. Thus, suppose we have an inverse sequence of finite spaces \[\xymatrix{F_1 & F_2 \ar[l]_{j_1} & F_3 \ar[l]_{j_2} & \ldots \ar[l]_{j_3}}\] where each $F_n$ is endowed with a map $\tau_n \colon F_n \to F_n$ which commutes with the bonding maps $j_n$. Let $\tau$ be the inverse limit of the maps $\tau_n$. We claim that $(\varprojlim F_n,\tau)$ has dynamical $\epsilon$--partitions for any $\epsilon> 0$. Denote by $\pi_n \colon \varprojlim F_n \to F_n$ the canonical projections. Consider, for every $n$, the family $\mathcal{U}_n := \{\pi_n^{-1}(e) : e \in F_n\}$. Clearly each $\mathcal{U}_n$ is a finite partition of $\varprojlim F_n$ by clopen sets (recall that $\pi_n$ are continuous and $F_n$ is endowed with the discrete topology). Moreover, each $\mathcal{U}_n$ is a dynamical partition because the behaviour of $\tau$ on the elements of $\mathcal{U}_n$ is conjugate (via the projection $\pi_n$) to the $\tau_n$ on $F_n$. The result follows from the fact that the family $\{\mathcal{U}_n: n \ge 1\}$ is cofinal among all clopen coverings of $X$ because the topology in $\varprojlim F_n$ is the initial topology with respect to the projections $\pi_n$.
\end{proof}

For the next few paragraphs suppose that $(X,f)$ indeed admits dynamical $\epsilon$--partitions for every $\epsilon> 0$, so that we can identify $X = \varprojlim F_n$ and $f = \tau$ in the notation of Proposition \ref{prop:finite_liminv}. This allows for a complete description of the closed invariant subsets of $(X, f)$ as follows. If $L$ is a closed $\tau$--invariant subset of $\varprojlim F_n$ (that is, $\tau(L) = L$), the same is true of its projection $L_n = \pi_n(L)$ onto $F_n$. Thus, $L_n$ is a finite union of periodic orbits and $L$ is the inverse limit of the inverse sequence determined by the sets $L_n$. Furthermore, we also obtain that $L$ is stable with respect to clopen sets because the sequence $\{\pi_n^{-1}(L_n): n \ge 1\}$ is a basis of clopen and invariant neighborhoods of $L$.

Recall that if $f$ is surjective then each $\tau_n$ is surjective as well; hence a bijection of $F_n$. Thus each $F_n$ consists of a disjoint union of $\tau_n$--periodic orbits. Given a point $p \in \varprojlim F_n$, for each $n$ we may consider the periodic orbit $P_n \subseteq F_n$ of the element $\pi_n(p)$ in $F_n$. The bonding maps between the $F_n$ restrict to bonding maps between the $P_n$, and the latter form an inverse sequence whose inverse limit is the smallest closed invariant subset that contains $p$; that is, the closure of the orbit of $p$. Such an inverse limit is known as an adding machine (if the periods of the orbits go to infinity) or a periodic orbit (if the periods of the orbits stabilize). We recall that adding machines are minimal sets, that is, every (full) orbit is dense in them. In fact, something even stronger is true: the positive and the negative semiorbit of any point in an adding machine is dense.

When $f$ is not surjective a similar picture emerges because every orbit of a map defined on a finite set is eventually periodic. Thus, the projection onto $F_n$ of the orbit of $p \in \varprojlim F_n$ under $\tau$ is an orbit that is eventually equal to a $\tau_n$--periodic orbit $P_n$. The $\omega$--limit of $p$ in $\varprojlim F_n$ is equal to the inverse limit of these $P_n$ as above. Thus we have shown the following:

\begin{remark}\label{rmk:omegalimit} The $\omega$--limit of every $p \in X$ is either a periodic orbit or an adding machine and it is stable with respect to clopen sets.
\end{remark}

Whereas above we have used the definition of adding machines as inverse limits of periodic orbits, it might be convenient to recall an alternative description that is somewhat more pictorial. Consider a sequence $b_1, b_2, \ldots$ of arbitrary positive integers that one should think of as bases (in the sense of elementary arithmetic) and let $\Lambda$ be the space of sequences of integers $(a_1,a_2,\ldots)$ subject to the condition that $0 \leq a_i < b_i$ (endowed with the product of the discrete topology on each entry). Each of these sequences can, heuristically, be thought of as some sort of multi-base expansion of an integer where the $i$th digit is referred to base $b_i$. Then the operation ``addition of one unit'' is represented by a map $\nu \colon \Lambda \to \Lambda$ that is defined in the natural way taking into account the ``carryover'' onto the next position whenever $b_i$ is reached at some position $i$. More explicitly, $\nu$ maps $(a_i)$ to $(a'_i)$ where
\begin{itemize}
	\item $a'_1 = a_1 + 1$ and $a'_i = a_i$ for $i \geq 2$ if $a_1 + 1 < b_1$ (no carryover at all);
	\item $a'_1 = 0$, $a'_2 = a_2 + 1$ and $a'_i = a_i$ for $i \geq 3$ if $a_1 + 1 = b_1$ and $a_2 + 1 < b_2$ (carryover from the first position to the second);
\end{itemize}
and so on. In general, if $k$ is the smallest position such that $a_k + 1 < b_k$, one sets $a'_i = 0$ for $i<k$, $a'_k = a_k + 1$, and $a'_i = a_i$ for $i > k$. The pair $(\Lambda, \nu)$ is an adding machine.

Remark \ref{rmk:omegalimit} agrees and partly reproduces results of Buescu, Kulczycki and Stewart \cite{buescu2, buescu1}. Adapting their result to our setting, they proved that a compact invariant subset of $X$ that is transitive and Lyapunov stable is either a periodic orbit or a Cantor set in which the dynamics is topologically conjugate to an adding machine. This statement follows easily from our arguments because a transitive compact invariant set in $\varprojlim F_n$ must be the limit of an inverse sequence that at every level $F_n$ is a transitive set and hence a single periodic orbit.

We are now ready to prove the characterization of dynamical systems that admit dynamical $\epsilon$--partitions for every $\epsilon > 0$ stated in Theorem \ref{thm:intro2}.

\begin{theorem} Let $X$ be compact and totally disconnected and let $f \colon X \to X$ be continuous. The following statements are equivalent:
\begin{itemize}
	\item[({\it i}\/)] $(X, f)$ admits dynamical $\epsilon$--partitions for every $\epsilon > 0$.
	\item[({\it ii}\/)] The $\omega$--limit of every point in $X$ is either a periodic orbit or an adding machine; moreover, these are stable with respect to clopen sets.
\end{itemize}
\end{theorem}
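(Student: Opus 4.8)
The plan is to treat the two implications separately. For $(i)\Rightarrow(ii)$ there is essentially nothing new to do: this is exactly the content of Remark \ref{rmk:omegalimit} and the paragraphs preceding it, where one uses Proposition \ref{prop:finite_liminv} to identify $(X,f)$ with an inverse limit $\varprojlim(F_n,\tau_n)$ of finite dynamical systems and then reads off the structure (periodic orbit or adding machine) and the clopen stability of each $\omega$--limit set from its projections to the $F_n$. The substance of the theorem is therefore $(ii)\Rightarrow(i)$, and for this I would not build the partitions by hand but rather verify condition $(v)$ of Theorem \ref{teo:epsilonparticiones} --- that every clopen $U\subseteq X$ has only finitely many itineraries --- and then appeal to the equivalence $(v)\Leftrightarrow(iii)$ proved there, noting that $(iii)$ is precisely statement $(i)$ of the present theorem.

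So assume $(ii)$ and fix a clopen $U\subseteq X$; the cases $U=\emptyset$ and $U=X$ being trivial, set $\lambda_0:=d(U,X\setminus U)>0$, which is a Lebesgue number for $\{U,X\setminus U\}$, so every subset of $X$ of diameter $<\lambda_0$ lies entirely inside $U$ or entirely inside $X\setminus U$. For each $p\in X$ write $L_p:=\omega(p)$; by $(ii)$ it is a periodic orbit or an adding machine and is stable with respect to clopen sets. Since periodic orbits (trivially) and adding machines (by the $(\Leftarrow)$ direction of Proposition \ref{prop:finite_liminv}) admit dynamical $\delta$--partitions for every $\delta>0$, I would choose a dynamical $\lambda_0$--partition of $L_p$ and, invoking clopen stability, use Lemma \ref{lem:extend} to extend it to a dynamical $\lambda_0$--partition $\mathcal V_p=\{V^p_1,\dots,V^p_{k_p}\}$ of a clopen, positively invariant neighbourhood $P_p\supseteq L_p$. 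Because each $V^p_j$ has diameter $<\lambda_0$ it is contained in $U$ or in $X\setminus U$; that is, $\mathcal V_p$ refines the partition $\{U\cap P_p,\,P_p\setminus U\}$ of $P_p$.

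Next I would make this local picture uniform over $X$. As $P_p$ is an open neighbourhood of the nonempty set $L_p$, some iterate of $p$ lands in $P_p$, and then positive invariance gives $f^k(p)\in P_p$ for all $k\ge N_p$; by continuity pick a clopen $O_p\ni p$ with $f^{N_p}(O_p)\subseteq P_p$. Extract a finite subcover $O_{p_1},\dots,O_{p_r}$ of $X$ and put $n^\ast:=\max_i N_{p_i}$. For any $x\in X$ there is $i$ with $x\in O_{p_i}$, hence $f^{n^\ast}(x)\in P_{p_i}$ and the whole forward orbit of $f^{n^\ast}(x)$ stays in $P_{p_i}$; since $\mathcal V_{p_i}$ is a dynamical partition, the itinerary of $f^{n^\ast}(x)$ with respect to $\mathcal V_{p_i}$ is simply the forward $\tau$--orbit of the single element of $\mathcal V_{p_i}$ containing $f^{n^\ast}(x)$, so it is one of at most $\sum_i k_{p_i}$ sequences, and because $\mathcal V_{p_i}$ refines $\{U,X\setminus U\}$ on $P_{p_i}$ this already determines $\chi_U(f^k(x))$ for every $k\ge n^\ast$. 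Together with the at most $2^{n^\ast}$ possibilities for the initial block $\bigl(\chi_U(x),\dots,\chi_U(f^{n^\ast-1}(x))\bigr)$, this shows that only finitely many itineraries with respect to $U$ occur. That is condition $(v)$ of Theorem \ref{teo:epsilonparticiones}, and the theorem then yields $(iii)$, which is statement $(i)$.

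The step I expect to be the real obstacle is the one controlling the itinerary of an arbitrary point $p$ from the behaviour on its $\omega$--limit set $L_p$. A priori the forward orbit of $p$ only \emph{approaches} $L_p$ without shadowing any single orbit inside it, so proximity to $L_p$ says nothing directly about the sequence $\chi_U(f^k(p))$. The way around this is to transport the rigidity of a dynamical partition of $L_p$ outward: once the orbit enters the clopen, positively invariant set $P_p$, its itinerary with respect to $\mathcal V_p$ is completely determined --- it is just the forward $\tau$--orbit of one partition element --- and the fineness of $\mathcal V_p$ then pins down the $U$--itinerary. This is also exactly the place where the hypothesis that $L_p$ be stable \emph{with respect to clopen sets} (rather than merely Lyapunov stable) is used, through Lemma \ref{lem:extend}: one needs the extended partition to consist of clopen sets that are small enough to refine $\{U,X\setminus U\}$.
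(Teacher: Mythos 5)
Your proof is correct. The $(i)\Rightarrow(ii)$ half is exactly the paper's (a citation of Remark \ref{rmk:omegalimit}), and for $(ii)\Rightarrow(i)$ you open the same way the paper does: a dynamical $\epsilon$--partition of each $\omega(p)$ (trivial for periodic orbits, via the inverse-limit structure for adding machines) extended by Lemma \ref{lem:extend} to a clopen, positively invariant neighbourhood. You then diverge. The paper assembles a global partition directly: it observes that $\bigcup_p \omega(p)=Y=\bigcap_{n\ge 0}f^n(X)$, uses compactness to obtain a dynamical $\epsilon$--partition of a neighbourhood of $Y$, and pulls it back to all of $X$ with finitely many applications of Lemma \ref{lem:trim}. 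You instead verify condition $(v)$ of Theorem \ref{teo:epsilonparticiones} --- finiteness of itineraries with respect to each clopen $U$ --- by the uniformization argument with the cover $\{O_{p_i}\}$ and the bound $2^{n^*}\cdot\sum_i k_{p_i}$, which is sound. Your route has the advantage of sidestepping the gluing of the local partitions $\mathcal V_{p_i}$ into a single partition near $Y$ (the overlaps of the $P_{p_i}$ are harmless when one is merely counting itineraries), but as written it closes the loop by invoking $(v)\Rightarrow(iii)$ of Theorem \ref{teo:epsilonparticiones}, whose only available proof runs $(v)\Rightarrow(ii)\Rightarrow(i)\Rightarrow(iii)$ and hence drags in the entire eigenvalue machinery of Section \ref{sec:eigen}. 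That detour is valid but unnecessary: from your finiteness conclusion you can pass to condition $(iv)$ by taking common refinements of the partitions $\{U_i,X\setminus U_i\}$ (the remarks in Section \ref{sec:partitions} preserve finiteness of itineraries), then start from an arbitrary clopen $\epsilon$--partition of $X$ and apply Lemma \ref{lem:refinement} to obtain a dynamical refinement, which is the desired dynamical $\epsilon$--partition --- an entirely elementary finish that keeps your argument self-contained.
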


\begin{proof} The arguments above prove $(i) \Rightarrow (ii)$. For the converse, let $\epsilon > 0$ fixed. We first claim that for every $p \in X$, its $\omega$--limit set $\omega(p)$ admits a dynamical $\epsilon$--partition. This is clear if $\omega(p)$ is a periodic orbit. If it is an adding machine, we may identify $(\omega(p),f|_{\omega(p)})$ as an inverse limit $\varprojlim P_n$ where the $P_n$ are finite sets with periodic dynamics. Denoting by $\pi_n$ the projection of $\omega(p)$ onto $P_n$, the desired dynamical $\epsilon$--partition of $\omega(p)$ is given by $\{\pi_n^{-1}(P) : P \in P_n\}$ for sufficiently large $n$.

Since $\omega(p)$ is stable with respect to clopen sets by assumption, we can use Lemma \ref{lem:extend} to extend the partition from the previous paragraph to a dynamical $\epsilon$--partition of a neighborhood of $\omega(p)$.
The stability condition forces the union of $\omega(p)$ over all $p \in X$ to be equal to $Y = \cap_{n \ge 0} f^n(X)$ so, in particular, $Y$ is closed. Then by compactness we can find a dynamical $\epsilon$--partition of a neighborhood of $Y$ and Lemma \ref{lem:trim} finally produces a dynamical $\epsilon$--partition of $X$.
\end{proof}

\section{Appendix: \v{C}ech homology and cohomology}

We collect here some information about \v{C}ech homology and cohomology that might be convenient for the reader that is not familiar with these theories. As suitable references we might suggest \cite{hatcher, spanier1}. We have not strived for generality but for simplicity; thus, many of the arguments will be quick and tailored for dimension zero. Coefficients are always taken in $\mathbb{C}$ and not displayed explicitly in the notation.

Let $X$ be a compact subset of some Euclidean space $\mathbb{R}^m$. (For the purposes of the present paper this is no restriction, since any compact, metric, totally disconnected space is $0$--dimensional and can therefore be embedded in $\mathbb{R}$ by the classical theorem of Menger and N\"obeling; see for instance \cite{hurewiczwallman1}). Taking succesively finer cubical grids of $\mathbb{R}^m$ one easily sees that $X$ has a basis of nested neighbourhoods that are compact polyhedra $(P_n)$. The inclusion maps $P_{n+1} \subset P_n$ induce homomorphisms $H_q(P_{n+1}) \to H_q(P_n)$ and $H^q(P_{n}) \to H^q(P_{n+1})$, where $H_q$ and $H^q$ denote singular homology and cohomology, respectively. The \v{C}ech homology and cohomology of $X$ (with coefficients in some group $G$) can be defined as the suitable limit of the homology and cohomology of these $P_n$ (again, with coefficients in $G$); namely, \begin{equation} \label{eq:dirlim} \check{H}_q(X) := \varprojlim\ \left\{ H_q(P_1) \leftarrow H_q(P_2) \leftarrow H_q(P_3) \leftarrow \ldots \right\} \end{equation} and \begin{equation} \label{eq:invlim} \check{H}^q(X) := \varinjlim\ \left\{ H^q(P_1) \rightarrow H^q(P_2) \rightarrow H^q(P_3) \rightarrow \ldots \right\}\end{equation} It is a theorem that these limits are a topological invariant of $X$; that is, they do not depend on how $X$ is embedded in $\mathbb{R}^n$.

Let $f \colon X \to X$ be a continuous mapping. Still thinking of $X$ as a subset of $\mathbb{R}^n$, by the Tietze extension theorem we may extend $f$ to a continuous mapping $\hat{f} \colon \mathbb{R}^m \to \mathbb{R}^m$. The compactness of $X$ then ensures that for each $n$ there exists $n'$ such that $\hat{f}(P_{n'}) \subset P_n$, and so $\hat{f}$ induces homomorphisms $\hat{f}_* \colon H_q(P_{n'}) \to H_q(P_n)$ and $\hat{f}^* : H^q(P_{n}) \to H^q(P_{n'})$. The inverse and direct limits of these are the homomorphisms $f_*$ and $f^*$ induced by $f$ in \v{C}ech homology and cohomology; they are endomorphisms of $\check{H}_q(X)$ and $\check{H}^q(X)$ respectively. Again, it is a theorem that they are independent of the particular extension $\hat{f}$.

Let us examine these definitions in dimension zero.
\smallskip

{\it \v{C}ech cohomology}. For the polyhedra $P_i$ it is well known that $H^0(P_i;\mathbb{C})$ can be identified with the set of maps from $P_i$ to $\mathbb{C}$ that are constant over each connected component of $P_i$. Under this identification the inclusion induced homomorphisms $H^0(P_i) \to H^0(P_{i+1})$ just correspond to restricting such a map $\varphi_i \colon P_i \to \mathbb{C}$ to a map $\varphi_{i+1} := \varphi_i |_{P_{i+1}} \colon P_{i+1} \to \mathbb{C}$. Pushing this forward along the direct limit \eqref{eq:dirlim} yields an element $\varphi \in \check{H}^0(X;\mathbb{C})$ which is the restriction $\varphi := \varphi_i|_X  \colon X \to \mathbb{C}$. Notice that $\varphi$ is locally constant. Conversely, suppose that $\varphi \colon X \to \mathbb{C}$ is locally constant. Then for every $c \in \mathbb{C}$ the preimage $\varphi^{-1}(c)$ is a (possibly empty) open set and the whole collection $\{\varphi^{-1}(c) : c \in \mathbb{C}\}$ is an open partition of $X$. Since $X$ is compact, this covering must have a finite subcovering, which implies that in fact $\varphi$ takes only finitely many values $c_j$ and their preimages $C_j := \varphi^{-1}(c_j)$ are actually clopen subsets of $X$. Let $N_j$ be disjoint open neighbourhoods of $C_j$ in $\mathbb{R}^m$ and choose $i$ big enough so that $P_i \subset \cup_j N_j$. Then every connected component of $P_i$ is connected in one $N_j$ and therefore intersects, at most, one $\varphi^{-1}(c_j)$. Thus there exists $\varphi_i \colon P_i \to \mathbb{C}$ that is constant on the components of $P_i$ and such that $\varphi_i |_X = \varphi$. Consequently $\varphi$ is an element of the direct limit \eqref{eq:dirlim}. Summing up, we conclude that $\check{H}^0(X;\mathbb{C})$ can be identified as the $\mathbb{C}$--vector space of locally constant mappings $\varphi \colon X \to \mathbb{C}$. Furthermore:
\begin{itemize}
	\item In the notation introduced above $\varphi = \sum_j c_j \chi_{C_j}$, where the $C_j$ are clopen subsets of $X$. Thus the set $\{\chi_U : U \text{ is a clopen subset of $X$} \}$ generates $\check{H}^0(X;\mathbb{C})$.
	\item The induced homomorphism $f^* \colon \check{H}^0(X;\mathbb{C}) \to \check{H}^0(X;\mathbb{C})$ is similarly easy to interpret in these terms: it just maps $\varphi$ to the composition $\varphi \circ f$.
\end{itemize}

Let us mention that there is a cohomology theory called Alexander--Spanier cohomology which coincides with the theory of \v{C}ech over paracompact spaces and where the interpretation of elements of $\check{H}^0(X)$ as locally constant mappings is completely straightforward. See \cite{spanierarticulo} and \cite[Section 6.4 ff.]{spanier1} for more information.
\smallskip

{\it The continuity property of \v{C}ech cohomology}. Suppose that $X$ is the intersection of a nested sequence of compacta $X_n$, all of them embedded in $\mathbb{R}^m$. Any locally constant map from $X_n$ to $\mathbb{C}$ restricts to a locally constant map from $X$ to $\mathbb{C}$, providing homomorphisms $\check{H}^0(X_n) \to \check{H}^0(X)$ which are none other than the homomorphisms induced by the inclusions $i_n \colon X_n \subset X$. Conversely, any locally constant map $\varphi$ from $X$ to $\mathbb{C}$ can be extended to a locally constant map $\hat{\varphi}$ from a neighbourhood of $X$ to $\mathbb{C}$ as argued earlier so in particular it can be extended to $X_n$ for big enough $n$. Thus every element of $\check{H}^0(X)$ belongs to the image of some $i_n^*$. All this shows that $\check{H}^0(X) = \varinjlim\ \{\check{H}^0(X_n)\}$, with the bonding homomorphisms being induced by the inclusions $X_{n+1} \subset X_n$. This (which holds in every dimension $q$) is known as the continuity property of \v{C}ech cohomology. 
\smallskip


{\it \v{C}ech homology}. The group $H_0(P_n)$ is the $\mathbb{C}$--vector space having the set of connected components of $P_n$ as a basis and therefore can be identified as the dual vector space to $H^0(P_n)$ when the latter is thought of as the set of maps from $P_n$ to $\mathbb{C}$ that are constant on the components of $P_n$, as we did before. Thus $H_q(P_n) = \Hom (H^q(P_n);\mathbb{C})$. It is easy to see from the definitions of direct and inverse limit that $\Hom(\varinjlim\ \{V_n\};\mathbb{C}) = \varprojlim\ \{\Hom( V_n ; \mathbb{C})\}$ (this essentially amounts to saying that to define a homomorphism on a monotonically increasing sequence of vector spaces one only needs to define it on each vector space and make sure that each definition extends the previous one). Applying all this to $V_n = H^0(P_n)$ we then have the identification \begin{multline*} \check{H}_0(X) =  \varprojlim\ \{ H_0(P_n) \} = \varprojlim\ \{\Hom(H^0(P_n);\mathbb{C})\} = \\ = \Hom(\varinjlim\ \{H^0(P_n)\};\mathbb{C}) = \Hom(\check{H}^0(X);\mathbb{C})\end{multline*} which exhibits \v{C}ech homology as the dual of \v{C}ech cohomology. Similar arguments show that $f_*$ is dual to $f^*$. (This reasoning is completely general and applies in any dimension $q$).

If $\lambda$ is an eigenvalue of $f_*$ then there exists $T \in \check{H}_0(X)$ which is nonzero and such that $(f_* - \lambda \id)(T) = 0$. Regarding $\check{H}_0(X)$ as the dual of $\check{H}^0(X)$ as explained above and therefore thinking of $T$ as a linear map from $\check{H}^0(X)$ to $\mathbb{C}$, this is equivalent to saying that $T \circ (f^* - \lambda \id) = 0$; that is, $\im (f^* - \lambda \id) \subset \ker(T)$. Thus $\lambda$ is an eigenvalue of $f_*$ if and only if there exists a linear map $T \colon \check{H}_0(X) \to \mathbb{C}$ that satisfies $\im (f^* - \lambda \id) \subset \ker( T)$, and this clearly happens if and only if $f^* - \lambda \id$ is not surjective.

\bibliographystyle{plain}
\bibliography{biblio}

\end{document}